\documentclass[12pt]{article}
\usepackage{amsmath}
\usepackage{amsthm}
\usepackage{amsfonts}
\usepackage{mathrsfs}
\usepackage{stmaryrd}
\usepackage{setspace}
\usepackage{fullpage}
\usepackage{amssymb}
\usepackage{breqn}
\usepackage{enumitem}
\usepackage{bbold}
\usepackage{authblk}
\usepackage{comment}
\usepackage{hyperref}
\usepackage{pgf,tikz}
\usepackage{graphicx}
\usepackage{subcaption}

\newtheorem{thm}{Theorem}[section]

\newtheorem{lem}[thm]{Lemma}

\newtheorem{prop}[thm]{Proposition}

\newtheorem{clm}[thm]{Claim}

\newcommand\ex{\ensuremath{\mathrm{ex}}}

\newcommand\cF{{\mathcal F}}
\newcommand\cG{{\mathcal G}}
\newcommand\cH{{\mathcal H}}

\newcommand\cN{{\mathcal N}}
\newcommand\cP{{\mathcal P}}

\def\lc{\left\lceil}
\def\rc{\right\rceil}

\def\lf{\left\lfloor}
\def\rf{\right\rfloor}

\newtheorem*{thm*}{Theorem}
\newtheorem*{prop*}{Proposition}
\newcommand{\ignore}[1]{}

\title{Generalized Turán problems for Berge hypergraphs}
\author{
Xiamiao Zhao \thanks{\small Department of Mathematical Sciences, Tsinghua University, Beijing 100084, China. Email:
\small zxm23@mails.tsinghua.edu.cn}\,,
\hspace{0.2em}
Xin Cheng\thanks{\small School of Mathematics and Statistics, Northwestern Polytechnical University and Xi'an-Budapest Joint Research Center for Combinatorics, Xi'an 710129, Shaanxi, P.R. China. Email:
\small \texttt{xincheng@mail.nwpu.edu.cn}.}\,, \hspace{0.2em}
D\'{a}niel Gerbner\thanks{\small Alfr\'ed R\'enyi Institute of Mathematics. Email:
\small \texttt{gerbner.daniel@renyi.hu}.}\,, \hspace{0.2em} }

\date{}

\begin{document}

\maketitle

\begin{abstract}    
    Let $\cH$ be a hypergraph and $F$ be a graph. 
    If there exists a bijection between the hyperedges of $\cH$ and the edges of $F$ such that each hyperedge contains its image, then we say that $\cH$ is a \textit{Berge copy} of $F$, and the collection of Berge copies of $F$ is denoted by Berge-$F$.
    Given $r$-graphs $\cF$ and $\cH$, the generalized hyper-Tur\'{a}n number $\ex_r(n, \cH, \cF)$ is the maximum number of copies of $\cH$ in $n$-vertex $\cF$-free $r$-graphs.

We study $\ex_r(n, \cH, \textup{Berge-}F)$. For general $\cH$, we connect this problem to counting copies of the shadow graph of $\cH$ in $F$-free graphs and obtain several exact results. In particular, we show that for any hypergraph $\cH$, if $k$ is sufficiently large, then $\ex_r(n, \cH, \textup{Berge-}K_k)$ is achieved by the balanced complete $(k-1)$-partite $r$-graph, generalizing a result of Morrison, Nir, Norin, Rza{\.z}ewski and Wesolek [\textit{Journal of Combinatorial Theory, Series B}, 162 (2023) 231--243] to the case of hypergraphs.

 We show that $\ex_r(n,K_s^r,\textup{Berge-}F)\le \ex_s(n,\textup{Berge-}F)$ and present sufficient conditions for equality. We also consider the connected generalized Tur\'{a}n number for Berge paths.

\end{abstract}

{\noindent{\bf Keywords}: generalized Tur\'{a}n number, Berge hypergraph, Tur\'{a}n-good}

{\noindent{\bf AMS subject classifications:	}05C35, 05C65, 05C38 }

\section{Introduction}


In this paper, we use capital letters to denote graphs, and calligraphic letters to denote hypergraphs.
The only exception is that we use $K_s^r$ to denote the complete $r$-uniform hypergraph with order $s$.
We use $V(G)$ and $E(G)$ to denote the vertex set and edge set of graph $G$, respectively.
We use $v(G) = |V(G)|$ and $e(G) = |E(G)|$ to denote the number of vertices and edges of $G$, respectively.
Let $G$ be a graph and let $S$ be a subset of $V(G)$.
We use $G[S]$ to denote the induced subgraph on $S$.
The same notations apply to hypergraphs.
A fundamental result in extremal graph theory is
the Turán theorem \cite{Tu}, which states that if an $n$-vertex graph does not contain $K_k$ as a subgraph, then it cannot have more edges than the Turán graph $T(n,k-1)$, which is the $n$-vertex complete $(k-1)$-partite graph with each part of order $\lfloor n/(k-1)\rfloor$ or $\lceil n/(k-1)\rceil$. More generally, the \textit{Turán number} $\ex(n,F)$ of a graph $F$ is the largest number of edges in an $n$-vertex graph that does not contain $F$ as a not necessarily induced subgraph. This problem has attracted a lot of attention, see \cite{Fu2} for a survey.

A natural generalization is to consider hypergraphs. Given an $r$-uniform hypergraph ($r$-graph for short) $\cF$, its \textit{Turán number} $\ex_r(n,\cF)$ is the largest number of hyperedges in an $r$-uniform hypergraph that does not contain $\cF$ as subhypergraph.
Another generalization of the Tur\'an number is the following.
Given graphs $F$ and $H$, let $\ex(n, H, F)$ be the maximum number of copies of $H$ in $n$-vertex $F$-free graphs.
We call this number the \textit{generalized Tur\'{a}n number}.
The systematic study of generalized Tur\'{a}n numbers was initiated by Alon and Shikhelman \cite{alon}, see \cite{GePa} for a survey.

It is a natural idea to combine the above generalizations.
Given $r$-graphs $\cF$ and $\cH$, let $\ex_r(n, \cH, \cF)$ be the maximum number of copies of $\cH$ in $n$-vertex $\cF$-free $r$-graphs.
Generalized hypergraph Tur\'an problems have attracted some attention, see e.g. \cite{axenovich2025tur，liu2020maximum} and \cite{aa} for a survey. 

In this paper, we consider a particular class of hypergraphs.
Gerbner and Palmer \cite{B3} generalized the well-known concept of Berge cycles to general graphs.
Let $\cH$ be a hypergraph and $F$ be a graph, we say that $\cH$ is a \textit{Berge copy} of $F$ if there exists a bijection between the hyperedges of $\cH$ and the edges of $F$ such that each hyperedge contains its image. In other words, we can enlarge the edges of $F$ to obtain a copy of $\cH$. We call the copy of $F$ the \textit{core} of the Berge-$F$.
Note that for a fixed graph $F$, there are many hypergraphs that are Berge copies of $F$.
We refer to this collection of hypergraphs as $Berge$-$F$.
In this paper, we consider generalized hypergraph Tur\'{a}n problems in the case we forbid all the Berge copies of a graph. We do not deal with counting Berge copies. Note that there are multiple ways to count Berge copies of $F$, as observed in \cite{gnv}. We may count the Berge-$F$ hypergraphs, or the cores, or the pairs of cores and hypergraphs.

Let us remark that the
Tur\'{a}n problems for Berge hypergraphs and generalized Tur\'{a}n problems are very much connected. The first such connection was given by Gerbner and Palmer \cite{GP2}, who proved that $\ex(n,K_r,F)\le \ex_r(n,\textup{Berge-}F)\le \ex(n,K_r,F)+\ex(n,F)$. See Lemma~\ref{gemupa} for a strengthening of the upper bound.

For $s\geq r$ and an $s$-graph $\cG$, let $\partial_r\cG$ denote the $r$-graph consisting of all sets contained in some hyperedge of $\cG$, which is called the \textit{$r$-shadow} of $\cG$.
Given a fixed copy of $\partial_2 \cH$, let $\gamma(\cH)$ denote the number of distinct copies of $\cH$ that have this fixed copy as their shadow. For example, let $K_4^{3-}$ denote the 3-uniform 4-vertex hypergraph with 3 edges. Its shadow is a $K_4$, and a given $K_4$ is the shadow of four distinct copies of $K_4^{3-}$, thus $\gamma(K_4^{3-})=4$.



We say that an $r$-uniform hypergraph $\cH$ is \textit{nice} with respect to a graph $F$, if each edge of $\partial_2 \cH$ is contained in at least $\min \{|E(F)|, \binom{r}{2}\}$ hyperedges of $\cH$. 
Given graphs $H$ and $F$ with $\chi(H)<\chi(F)=k$, we say that $H$ is weakly $F$-Tur\'an-stable, if for any $n$-vertex $F$-free graph $G$ that contains $\ex(n,H,F)-o(n^{|V(H)|})$ copies of $H$, we can obtain a complete $(k-1)$-partite graph from $G$ by adding and deleting $o(n^2)$ edges.
We say the $H$ is \textit{$F$-Tur\'{a}n-stable} if an $n$-vertex $F$-free graph $G$ contains at least $ex(n, H, F) - o(n^{|V(H)|})$ copies of $H$, then we can obtain the $(k-1)$-partite Tur\'{a}n graph by adding and deleting $o(n^2)$ edges.
The difference between weakly Tur\'{a}n-stable and Tur\'{a}n-stable is that the complete multipartite graph is replaced by Tur\'{a}n graph.
These notions were introduced in \cite{gerb}, and were used to obtain several exact generalized Tur\'an results in \cite{gerb2,ger2,ger3}.

\begin{thm} \label{thm: sharp result}

 \textbf{(i)}   If $\cH$ is nice with respect to $F$, then $\ex_r(n,\cH,\textup{Berge-}F)=\ex(n,\partial_2 \cH,F)\gamma(\cH)$.

 \textbf{(ii)}  If $\partial_2\cH$ is weakly $K_k$-Tur\'an-stable, then for sufficiently large $n$, $\ex_r(n,\cH,\textup{Berge-}K_k)=\ex(n,\partial_2 \cH,K_k)\gamma(\cH)$.
\end{thm}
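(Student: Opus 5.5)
Both parts need a lower bound and an upper bound; the lower bound is common to both. Take an $n$-vertex $F$-free graph $G$ with $\ex(n,\partial_2\cH,F)$ copies of $\partial_2\cH$, and let $\cG$ be the $r$-graph of all $r$-cliques of $G$. Any Berge-$F$ in $\cG$ has its core inside $G$, since every hyperedge is a clique of $G$. Hence $\cG$ is Berge-$F$-free. Every copy of $\partial_2\cH$ in $G$ has all its $r$-cliques in $\cG$, so it supports $\gamma(\cH)$ copies of $\cH$. Distinct copies of $\cH$ with distinct shadows are distinct, so $\cG$ contains at least $\ex(n,\partial_2\cH,F)\gamma(\cH)$ copies of $\cH$.

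\textbf{Upper bound in (i).} Let $\cG$ be a Berge-$F$-free $r$-graph. I would build an auxiliary graph $G$ on $V(\cG)$ by a greedy/matching procedure in the spirit of Lemma~\ref{gemupa} and the Gerbner--Palmer argument. Call a pair $uv$ \emph{heavy} if it lies in at least $|E(F)|$ hyperedges of $\cG$. Put all heavy pairs into $G$. Then run a system of distinct representatives among the remaining hyperedges: take a maximum matching in the bipartite graph between the light pairs and the hyperedges containing them, and add the matched light pairs to $G$.

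I then claim $G$ is $F$-free. Suppose $F\subseteq G$. Each matched light edge of the copy has its own hyperedge. Each heavy edge can then be assigned greedily a hyperedge not yet used, since at most $|E(F)|-1$ hyperedges are used by the other edges of $F$. This gives a Berge-$F$, a contradiction.

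Next, every copy of $\cH$ in $\cG$ has its shadow in $G$. Consider an edge $uv$ of $\partial_2\cH$. If it is heavy, it is in $G$. Otherwise, by niceness it lies in at least $m=\min\{|E(F)|,\binom r2\}$ hyperedges of this copy, and these all contain $uv$.

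The key step is to show that a light pair covered by at least $\binom r2$ hyperedges gets matched. If $|E(F)|\le\binom r2$, then $m=|E(F)|$ and the pair is heavy, so the case is trivial. Otherwise I would argue via Hall's theorem and augmenting paths: each hyperedge contains only $\binom r2$ pairs, so if an unmatched light pair $uv$ lies in $\ge \binom r2$ hyperedges, all of them must be matched to other pairs. Following alternating paths gives a Hall-violating set $S$ of light pairs whose neighbourhood $N(S)$ is fully matched into $S$. Double counting incidences within $S\times N(S)$ should contradict maximality, provided we first choose the matching to maximize the number of covered pairs that appear in copies of $\cH$. Expect this to be the main technical obstacle; if it resists, I would instead adapt the statement of Lemma~\ref{gemupa}, which presumably already provides a graph $G$ with exactly this property.

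Given this, the count follows. Each copy of $\cH$ maps to a copy of $\partial_2\cH$ in $G$, and each copy of $\partial_2\cH$ is the image of at most $\gamma(\cH)$ copies. So the number of copies of $\cH$ is at most $\gamma(\cH)\,\ex(n,\partial_2\cH,F)$.

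\textbf{Upper bound in (ii).} Here niceness fails, and I would use stability. Take an extremal $\cG$ and the graph $G$ from Lemma~\ref{gemupa}, so $G$ is $K_k$-free, together with the shadow-covered pairs. Copies of $\cH$ whose shadow is not in $G$ number only $O(n^{v(\cH)-1})$. This is because light pairs lie in few hyperedges and the number of hyperedges is $O(n^2)$ by Berge-$K_k$ Turán bounds. So $G$ contains $\ex(n,\partial_2\cH,K_k)-o(n^{v(\cH)})$ copies of $\partial_2\cH$, and by weak stability $G$ is $o(n^2)$-close to a complete $(k-1)$-partite graph.

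Then apply the standard cleaning argument. Vertices of low $\cH$-degree can be removed and replaced by a clone of a typical vertex, which only increases the count. Then show that no hyperedge or pair lies inside a part: such an edge plus a typical neighbourhood would create a Berge-$K_k$ using the many available hyperedges across parts. This yields a $(k-1)$-partite $\cG$ whose $\cH$-count is at most $\gamma(\cH)\,\ex(n,\partial_2\cH,K_k)$ by part (i)-style counting on its $2$-shadow. The delicate point is the cleaning step, handling hyperedges inside a part.
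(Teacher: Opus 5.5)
Your lower bound is correct, but both upper bounds have gaps.

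\textbf{Part (i).} The step you flag fails as stated. A maximum matching need not cover a light pair lying in $\binom r2$ hyperedges. For $r=3$, suppose $uv$ lies only in $\{u,v,x_1\},\{u,v,x_2\},\{u,v,x_3\}$; matching each $\{u,v,x_i\}$ to $ux_i$ is a maximum matching that leaves $uv$ uncovered. A double count over $S\times N(S)$ cannot rescue this, because the other pairs in $S$ may lie in a single hyperedge. The statement you actually need is that pairs in the shadow of a copy of $\cH$ get covered. That is true, but only if you restrict the count to such pairs and to hyperedges lying in copies of $\cH$ (all of whose pairs are again such pairs).

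More to the point, the matching is unnecessary, and dropping it gives the paper's proof. Set $m=\min\{|E(F)|,\binom r2\}$ and let $G_0$ be the graph of all pairs lying in at least $m$ hyperedges of $\cG$. Then $G_0$ is already $F$-free. If $m=|E(F)|$, a copy of $F$ extends greedily to a Berge-$F$. If $m=\binom r2$, Hall's theorem applies: each edge of the copy lies in at least $\binom r2$ hyperedges, and each hyperedge contains at most $\binom r2$ of its edges. Niceness puts the shadow of every copy of $\cH$ into $G_0$, and the $\gamma(\cH)$ count finishes. The paper phrases this as deleting the hyperedges that contain a pair of multiplicity below $m$, which by niceness lie in no copy of $\cH$.

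\textbf{Part (ii).} This part is only an outline, and its main steps do not go through.

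First, a Berge-$K_k$-free $r$-graph can have $\Theta(n^r)$ hyperedges, for example $T_r(n,k-1)$, so the hyperedge count is not $O(n^2)$. What is $O(n^2)$ is the number of hyperedges containing a pair of multiplicity below $m$. These cost $O(n^{|V(\cH)|+2-r})$ copies of $\cH$, which for $r=3$ is of the same order as the gain per atypical vertex and needs separate treatment.

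Second, cloning is not legitimate for Berge-free hypergraphs. The new hyperedges $h-w+v'$ raise the multiplicity of the pairs in $h\setminus\{w\}$. For instance, $\{wab,wac,wbc\}$ is Berge-$K_4$-free, but after adding a clone $v'$ of $w$ the six hyperedges form a Berge-$K_4$ with core $\{v',a,b,c\}$. So you cannot symmetrize away atypical vertices before you know the structure.

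Third, and most importantly, the argument that yields the exact bound is missing. The paper never modifies $\cG$. Instead it proceeds as follows.
\begin{enumerate}
\item It chooses the complete $(k-1)$-partite $T$ minimizing the number of edges of $\partial_2\cG'$ inside parts.
\item It lets $U_i\subseteq V_i$ be the $o(n)$ vertices with linearly many non-neighbours in another part, and sets $W_i=V_i\setminus U_i$.
\item It shows each $u\in U_i$ has only $o(n)$ neighbours in $V_i$, and that $W_i$ spans no edge of $\partial_2\cG'$; otherwise $\partial_2\cG'$ would contain a $K_k$.
\item It compares counts directly: $\partial_2\cG'$ has only $o(\sum_i|U_i|n^{|V(\cH)|-1})$ copies of $\partial_2\cH$ outside $T$, while $T$ has $\Theta(\sum_i|U_i|n^{|V(\cH)|-1})$ copies outside $\partial_2\cG'$.
\item In the remaining cases ($\sum_i|U_i|=0$, or $r=3$), either a deleted hyperedge meets some $W_i$ in two vertices, or $\cG$ is $(k-1)$-partite (for $r=3$, too few copies were deleted). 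In the first case, that hyperedge is used for the inner pair of a $K_k$ whose other pairs have multiplicity at least $m$, giving a Berge-$K_k$.
\end{enumerate}
Your remark about an inner pair plus a typical neighbourhood only gestures at this last step.
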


We say a graph $H$ is \textit{$K_k$-Tur\'{a}n-good} if $\ex(n,H,K_k)\neq 0$ and this extremal number is attained by the Tur\'{a}n graph $T(n,k-1)$, that is, the Tur\'{a}n graph has the maximum number of copies of $H$ for sufficiently large $n$.
This notion was introduced by Gerbner and Palmer \cite{Gerbner2022Some}.
In the same paper, Gerbner and Palmer conjectured that for every graph $H$ there exists $k_0 = k_0 (H)$ such that $H$ is $K_k$-Tur\'{a}n-good for every $k \geq k_0$.
Later, Morrison, Nir, Norin, Rza{\.z}ewski and Wesolek \cite{morrison2023every} confirmed this conjecture.
\begin{thm}[\cite{morrison2023every}]\label{thm: eventually Turan good}
  Let $H$ be a graph.
  There exists an integer $k_0 = k_0(H)$ such that $H$ is $K_k$-Tur\'{a}n-good for all $k \geq k_0$.
\end{thm}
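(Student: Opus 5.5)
The plan is to work with a $K_k$-free $n$-vertex graph $G$ maximizing the number $\cN(H,G)$ of copies of $H$, where $h=|V(H)|$, $k$ is large compared to $h$, and $n$ is large. The proof has two parts: first show $G$ is (essentially) complete $(k-1)$-partite, then show the parts are balanced. In both parts, largeness of $k$ is used to make copies of $H$ that place two vertices in the same part a lower-order perturbation.

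\textbf{Step 1: reduce to complete multipartite graphs.} I would adapt Zykov symmetrization. For non-adjacent $u,v$, write
\[
\cN(H,G)=A_u+A_v+B_{uv}+C,
\]
where $A_u$ (resp.\ $A_v$) counts copies containing $u$ but not $v$ (resp.\ $v$ but not $u$), $B_{uv}$ counts copies containing both, and $C$ counts copies containing neither. Replacing $u$ by a twin of $v$ keeps $G$ $K_k$-free, since twins are non-adjacent. This changes the count by $A_v-A_u$ plus a change in $B$-type terms. Those terms are $O(n^{h-2})$, whereas the $A$-terms are of order $n^{h-1}$. Running the standard sequential Zykov procedure therefore shows that non-adjacency is asymptotically an equivalence relation: $G$ can be turned into a complete multipartite $K_k$-free graph with at most $k-1$ parts by changing $o(n^2)$ edges, at a cost of $o(n^h)$ copies. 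This is a weak Turán-stability statement for $H$. By cleaning (deleting low-degree vertices and re-adding them optimally) I would then upgrade it to an exact structure. Every vertex of $G$ must have about the typical number of copies through it, so each vertex has almost no neighbours in its own part and is adjacent to almost everything in the other parts. Any edge inside a part either creates a $K_k$ or can be traded, so $G$ is exactly complete $(k-1)$-partite. This exactness step needs $n$ large, and $k\ge k_0(H)$ will be used so that ``vertices outside their part'' cost more than they gain.

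\textbf{Step 2: balance the parts.} With $G=K(a_1,\dots,a_{k-1})$, I would expand $\cN(H,G)$ by colourings $V(H)\to[k-1]$ that are proper, i.e.\ assign vertices to parts. The dominant term comes from injective colourings and equals $c_H\, e_h(a_1,\dots,a_{k-1})$, where $e_h$ is the elementary symmetric polynomial. The remaining terms come from colourings identifying two non-adjacent vertices of $H$. For fixed total $n$, each such term is at most $O(h^2/k)$ times the main term in the relevant sensitivity. By Maclaurin's inequality, together with a local exchange argument (moving one vertex from a larger part to a smaller part changes $e_h$ by a definite positive amount, proportional to $(a_i-a_j)e_{h-1}$ of the other parts), $e_h$ is uniquely maximized at balanced sizes with a quantitative gap.

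\textbf{Main obstacle.} I expect the hardest step to be showing that the lower-order (non-injective) terms cannot overturn this exchange gain. I would bound the derivative of the non-injective part with respect to moving one vertex between parts $i,j$ by $O\big(\tfrac{h^2}{k}\big)|a_i-a_j|\,e_{h-1}$-type quantities. Choosing $k_0(H)$ large enough then makes every unbalancing move strictly lose, which gives $T(n,k-1)$ as the unique extremal graph.
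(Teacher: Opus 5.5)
The paper does not prove this theorem. It is quoted from \cite{morrison2023every} and used as a black box, together with the stability result of \cite{gerbner2024stability}, so your proposal has to stand on its own. Step 2 can be repaired, but Step 1 has a genuine gap: Zykov symmetrization does not carry over from cliques to a general $H$ in the way you assert.

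Single-vertex replacement gives only first-order information. In an extremal $G$, non-adjacent vertices lie in the same number of copies of $H$ up to $O(n^{h-2})$. That alone says nothing about structure. It holds exactly in the join of a balanced blow-up of $C_5$ with a complete $(k-3)$-partite graph, because any two non-adjacent vertices there are swapped by an automorphism. Yet that graph is $K_k$-free and $\Omega(n^2)$ edges away from every complete multipartite graph.

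Both structural mechanisms of Zykov's argument break for non-clique $H$:
\begin{itemize}
\item \emph{Sequential cloning.} When you clone $u$ onto its non-neighbours one at a time, each step changes the weight of $u$ itself by up to $\Theta(n^{h-2})$: you lose copies through $u$ and the replaced vertex, and gain copies through $u$ and its new twin. Over linearly many steps this drift can reach $\Theta(n^{h-1})$, the order of the weights themselves. So $u$ need not stay of maximum weight, and your claimed total cost of $o(n^h)$ is unjustified. For cliques the drift is zero, because no clique through $u$ contains a non-neighbour of $u$.
\item \emph{Transitivity.} Take $u\not\sim v$, $u\not\sim w$, $v\sim w$, and replace $v,w$ by twins of $u$. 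The only favourable term is the number of copies through both $v$ and $w$, which is $\Theta(n^{h-2})$. The uncontrolled terms are of the same order: copies through the non-adjacent pairs $\{u,v\}$, $\{u,w\}$, and through the new pair of twins. If $H$ is not complete, a fixed non-adjacent pair lies in $\Theta(n^{h-2})$ copies, since any non-adjacent pair of vertices of $H$ can be mapped onto it.
\end{itemize}
Large $k$ does not rescue this. It makes copies with two vertices in a common part a small fraction of \emph{all} copies, not of the copies through a \emph{given} pair, and the latter is the scale on which Zykov's argument operates. So ``non-adjacency is asymptotically an equivalence relation'' is asserted, not derived.

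A warning sign is that Step 1 never uses $k\ge k_0$. If it worked, it would give weak $K_k$-Tur\'an-stability for every $H$ and every $k>\chi(H)$. The paper instead has to import stability from \cite{gerbner2024stability}, and only for $k\ge k_1(H)$. You need an ingredient that exploits $k$ large to force structure on an extremal graph. One option is to invoke a stability theorem directly. Another might be:
\begin{itemize}
\item equalize \emph{all} vertex weights (delete $v$ and clone $u$ in $G-v$, which preserves $K_k$-freeness);
\item show this forces every degree close enough to $(1-\frac{1}{k-1})n$;
\item apply an Andr\'asfai--Erd\H{o}s--S\'os-type theorem to conclude $G$ is $(k-1)$-partite.
\end{itemize}
Your cleaning argument can only begin after this. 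It also needs all parts to have linear size, which has to come from Step 2.

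Step 2 has a smaller gap. Your $O(h^2/k)$ relative bound on non-injective colourings holds only near the balanced vector. If fewer than $h$ parts have linear size, $e_h$ is negligible while the non-injective terms need not be, so the exchange comparison says nothing there. First localize the maximizer. The normalized count is $P_H(x)=\Pr[\text{a random colouring with distribution } x \text{ is proper}]$. Bonferroni gives $1-P_H(x)\ge e(H)\sum_i x_i^2-\binom{e(H)}{2}\bigl(\sum_i x_i^2\bigr)^{3/2}$, to be compared with $1-P_H(u)\le e(H)/(k-1)$ at the uniform vector $u$. This pins any maximizer near $u$; only then does a local Hessian or exchange argument apply. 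Also, the exchange gain for $e_h$ is $(a_i-a_j-1)\,e_{h-2}$ of the remaining parts, not an $e_{h-1}$ term.
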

We generalize this result to Berge hypergraphs.
We say that an $r$-uniform hypergraph $\cH$ is \textit{Berge-$K_k$-Tur\'{a}n-good}, if $\ex_r(n,\cH,\textup{Berge-}K_k)\neq 0$ and this extremal number is attained by the Tur\'an graph $T_r(n,k-1)$.
Theorem \ref{thm: eventually Turan good} together with Theorem \ref{thm: sharp result}, implies that for every $r$-graph $\cH$ and for all $k\geq k'(\cH) = \max\{k_0(\partial_2 \cH),k_1(\partial_2 \cH)\}$, $\cH$ is Berge-$K_k$-Tur\'an-good.

\begin{thm}\label{thm: eventurally Berge Turan good}
    For every $r\geq 2$, and every $r$-graph $\cH$,
    there exists an integer $k' = k'(\cH)$ such that $\cH$ is Berge-$K_k$-Tur\'an-good for all $k \geq k'$.
\end{thm}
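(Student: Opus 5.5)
The plan is to combine Theorem~\ref{thm: sharp result}(ii) with Theorem~\ref{thm: eventually Turan good}, applied to the shadow graph $H:=\partial_2\cH$. By Theorem~\ref{thm: eventually Turan good}, $H$ is $K_k$-Tur\'an-good for all $k\ge k_0(H)$: for large $n$ we have $\ex(n,H,K_k)=\cN(H,T(n,k-1))>0$. To pass to hypergraphs via Theorem~\ref{thm: sharp result}(ii), I additionally need $H$ to be weakly $K_k$-Tur\'an-stable for all large $k$. I would define $k_1(H)$ as a threshold beyond which this holds, and set $k'=\max\{k_0(H),k_1(H),|V(H)|+1\}$. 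The last term guarantees $\chi(H)\le |V(H)|<k$, so the stability notion applies and $H\subseteq T(n,k-1)$ for large $n$, giving nonzero counts.

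Given weak stability, Theorem~\ref{thm: sharp result}(ii) yields $\ex_r(n,\cH,\textup{Berge-}K_k)=\gamma(\cH)\,\ex(n,H,K_k)=\gamma(\cH)\,\cN(H,T(n,k-1))$. It remains to check that the $r$-uniform Tur\'an graph $T_r(n,k-1)$, whose hyperedges are the $r$-sets meeting each part in at most one vertex, attains this value. First, it is Berge-$K_k$-free. Any Berge-$K_k$ has a core with $k$ vertices, two of which lie in the same part by pigeonhole. The hyperedge assigned to the edge between them would then contain two vertices of one part, which is impossible. Second, every copy of $H$ in $T(n,k-1)$ spans a vertex set that is a partial transversal on its pairs. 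Hence each of the $\gamma(\cH)$ copies of $\cH$ with that shadow consists of hyperedges whose vertices lie in pairwise distinct parts, so they all lie in $T_r(n,k-1)$. Thus $T_r(n,k-1)$ contains exactly $\gamma(\cH)\,\cN(H,T(n,k-1))$ copies of $\cH$, and this is nonzero.

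The main obstacle is establishing weak $K_k$-Tur\'an-stability of $H$ for large $k$; the Tur\'an-goodness in Theorem~\ref{thm: eventually Turan good} is only an exact statement. I would first try to invoke a known result that $K_k$-Tur\'an-good graphs are weakly Tur\'an-stable. Failing that, I would argue directly. Through the Zykov symmetrization/removal lemma setup, a $K_k$-free $G$ with $\ex(n,H,K_k)-o(n^{|V(H)|})$ copies of $H$ is, after deleting $o(n^2)$ edges, $K_k$-free with essentially the same count. The asymptotic maximum of the $H$-density over $K_k$-free graphs is then attained only by complete $(k-1)$-partite blowups; this can be shown by the same Lagrangian-type optimization used in \cite{morrison2023every}, which for large $k$ identifies the optimal weighting uniquely. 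A standard compactness argument converts this uniqueness of the limit object into the required $o(n^2)$ edit distance to a complete $(k-1)$-partite graph.
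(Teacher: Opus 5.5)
Your reduction is the same as the paper's. The paper takes $k'=\max\{k_0(\partial_2\cH),k_1(\partial_2\cH)\}$, applies Theorem~\ref{thm: sharp result}(ii), and then uses Theorem~\ref{thm: eventually Turan good} to conclude that $T_r(n,k-1)$ is extremal. You also check explicitly that $T_r(n,k-1)$ is Berge-$K_k$-free and contains exactly $\gamma(\cH)\,\cN(\partial_2\cH,T(n,k-1))$ copies of $\cH$, and you add the term $|V(\partial_2\cH)|+1$ to guarantee $\chi(\partial_2\cH)<k$. The paper leaves these details implicit, and your versions are correct.

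\textbf{The gap is the step you flagged yourself: the existence of $k_1$.} The paper does not prove it. It cites \cite{gerbner2024stability} for the fact that $\partial_2\cH$ is $K_k$-Tur\'an-stable, hence weakly so, for all $k\ge k_1(\partial_2\cH)$. Neither of your two substitutes closes this step.
\begin{itemize}
\item \emph{Goodness implies stability.} ``$K_k$-Tur\'an-good implies weakly $K_k$-Tur\'an-stable'' is not a theorem you can invoke. Goodness is an exact statement about one maximizer. By itself it says nothing about other $K_k$-free graphs with $\ex(n,H,K_k)-o(n^{|V(H)|})$ copies of $H$, and controlling those graphs is the whole content of weak stability.
\item \emph{The direct sketch.} It assumes the crux. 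You frame the optimization from \cite{morrison2023every} as choosing the best weighting of a complete multipartite blow-up. Uniqueness of the optimal weighting within that class does not show that every extremal $K_k$-free limit object belongs to that class. It also does not show that near-extremal graphs are $o(n^2)$-close to some complete $(k-1)$-partite graph.
\end{itemize}
Without that, the compactness step has nothing to work with. Upgrading ``some extremal graph is (balanced) complete multipartite'' to a robust stability statement is exactly the nontrivial input the paper imports from \cite{gerbner2024stability}. To complete the proof, cite that eventual Tur\'an-stability result, or reprove it, in place of the compactness sketch.
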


The \textit{expansion} $F^{r+}$ of a graph $F$ is the specific $r$-uniform Berge copy that contains the most vertices, i.e., we add $r-2$ new vertices to each edge such that we add distinct vertices to distinct edges. Tur\'an problems for expansions have attracted a lot of attention, see \cite{MuVe} for a survey. Generalized Tur\'{a}n problems for expansions have been studied by Zhou, Zhao and Yuan \cite{aa}, who showed the following bound.

\begin{thm}[\cite{aa}]\label{lem0}
Let $s\geq r$ be integers and $F$ be an arbitrary graph. Then ${\rm{ex}}_r(n,K_s^{r},F^{r+})\leq \ex_s(n,F^{s+})$.
\end{thm}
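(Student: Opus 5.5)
The plan is to pass from the $r$-graph to an auxiliary $s$-graph whose hyperedges are exactly the copies of $K_s^r$. Let $\cG$ be an $n$-vertex $F^{r+}$-free $r$-graph. Define the $s$-graph $\cG^*$ on $V(\cG)$ whose hyperedges are the $s$-sets $S$ with $\cG[S]\cong K_s^r$, i.e.\ every $r$-subset of $S$ is a hyperedge of $\cG$. Copies of $K_s^r$ in $\cG$ correspond bijectively to hyperedges of $\cG^*$, so the number of copies of $K_s^r$ in $\cG$ equals $e(\cG^*)$. It therefore suffices to show that $\cG^*$ is $F^{s+}$-free. Then $e(\cG^*)\le \ex_s(n,F^{s+})$, and maximizing over $\cG$ gives the theorem.

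To prove $\cG^*$ is $F^{s+}$-free, suppose it contains a copy of $F^{s+}$. Its core is a copy of $F$. To each edge $e=uv$ of the core is assigned a hyperedge $S_e\in E(\cG^*)$ with $e\subseteq S_e$. By the definition of the expansion, the sets $S_e\setminus e$ are pairwise disjoint and disjoint from $V(F)$. For each core edge $e=uv$, choose an $r$-set $T_e$ with $\{u,v\}\subseteq T_e\subseteq S_e$; this is possible since $2\le r\le s$. Because $\cG[S_e]$ is complete, $T_e\in E(\cG)$.

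It remains to check that the $T_e$ form an $F^{r+}$ with the same core. We have $T_e\setminus e\subseteq S_e\setminus e$, so the added $r-2$ vertices of $T_e$ avoid $V(F)$ and are disjoint from those of $T_f$ for $f\neq e$. In particular the $T_e$ are distinct. Hence $\{T_e\}$ is a copy of $F^{r+}$ in $\cG$, a contradiction.

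The only step needing care is this verification that the chosen $r$-subsets keep the expansion structure. No new core vertex may enter any $T_e$, and no two $T_e$ may share a non-core vertex. Both facts are inherited directly from the corresponding properties of the $S_e$ in $F^{s+}$, so I expect no real obstacle; the rest is bookkeeping.
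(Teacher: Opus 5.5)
Your proof is correct. The paper does not prove this statement itself (it is quoted from \cite{aa}), but your argument is the same $s$-clique hypergraph reduction the paper uses for its Berge analogue, Theorem \ref{main}; for expansions, the disjointness of the sets $S_e\setminus e$ makes your chosen $r$-sets $T_e$ automatically distinct, so the Hall's-theorem matching step required in the Berge case is not needed.
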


We present a result analogous to Theorem \ref{lem0}, which transfers the generalized Tur\'{a}n problem for Berge hypergraphs to Tur\'{a}n problem for Berge hypergraphs.

\begin{thm}\label{main}
Let $s\geq r+1$ be integers and $F$ be an arbitrary graph. Then $\ex_r(n,K_s^r,\textup{Berge-}F)\le \ex_s(n,\textup{Berge-}F)$.
\end{thm}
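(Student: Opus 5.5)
The plan is to go through an auxiliary $s$-graph. Let $\cG_0$ be an $n$-vertex Berge-$F$-free $r$-graph. Define the $s$-graph $\cG$ on the same vertex set whose hyperedges are the $s$-sets $S$ with $\binom{S}{r}\subseteq E(\cG_0)$. The hyperedges of $\cG$ are in bijection with the copies of $K_s^r$ in $\cG_0$, so it is enough to show that $\cG$ is Berge-$F$-free. Then the number of copies of $K_s^r$ is $|E(\cG)|\le \ex_s(n,\textup{Berge-}F)$.

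Suppose $\cG$ contains a Berge-$F$: distinct hyperedges $S_e\in E(\cG)$ indexed by $e\in E(F)$, with $e\subseteq S_e$. We want distinct hyperedges $h_e\in E(\cG_0)$ with $e\subseteq h_e$, giving a Berge-$F$ in $\cG_0$ and a contradiction.

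For each $e$, the admissible choices are the set $A_e$ of all hyperedges of $\cG_0$ containing $e$. This set contains every $r$-subset of $S_e$ through $e$, so $|A_e|\ge\binom{s-2}{r-2}$. I would find the $h_e$ via Hall's theorem on the bipartite graph between $E(F)$ and $E(\cG_0)$.

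Hall's condition needs $|\bigcup_{e\in I}A_e|\ge |I|$ for every $I\subseteq E(F)$. I would prove it by a counting argument. Fix $I$, and let $U$ be the set of $r$-subsets of $S_e$ that contain at least one edge of $I$, taken over all $e\in I$. Every set in $U$ is a hyperedge of $\cG_0$ containing some $f\in I$, so $U\subseteq \bigcup_{f\in I}A_f$.

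Now count incidences. Every $f\in I$ lies in at least $\binom{s-2}{r-2}$ members of $U$, namely the $r$-subsets of $S_f$ through $f$. A single $r$-set contains at most $\binom r2$ edges of $F$. Hence
\[
|U|\ \ge\ |I|\binom{s-2}{r-2}\Big/\binom r2 .
\]
This gives Hall's condition when $\binom{s-2}{r-2}\ge\binom r2$. That holds for $s$ large compared to $r$, but fails for, e.g., $s=r+1$.

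The main obstacle is this small-$s$ regime, in particular $s=r+1$, where each $S_e$ gives only $r-1$ candidates through $e$. Here I would use that the $S_e$ are distinct $s$-sets whose full $r$-shadows lie in $\cG_0$. If $e_1,\dots,e_m$ all use only $r$-subsets from a small pool, those $r$-subsets must lie in pairwise intersections of the $S_e$'s. Two distinct $(r+1)$-sets share at most one $r$-subset.

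So I would count the pairs $(e,h)$ with $h\subseteq S_e$ an $r$-set through $e$, there being $(r-1)|I|$ of them. I would then bound how many pairs a single $r$-set $h$ can absorb. The number of $e\in I$ with $e\subseteq h\subseteq S_e$ is at most $\min\{\binom r2,\ \#\{e: h\subseteq S_e\}\}$, and I would use the distinctness of the $S_e$ to show that on average each $h\in U$ absorbs at most $r-1$ pairs.

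If this averaging cannot be closed directly, the fallback is to exploit the freedom in choosing the Berge copy. Among all Berge-$F$ copies in $\cG$, take one minimizing the total overlap $\sum|S_e\cap S_{e'}|$. Then argue that a Hall violator would allow replacing some $S_e$ by another clique, or would give the needed distinct $h_e$ directly, because $S_e\setminus h$ supplies an unused vertex.
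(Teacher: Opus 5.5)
\textbf{The gap is the case $s=r+1$.} Your reduction and your Hall double count are exactly the paper's argument for the easy range. They also cover more than you claim. The inequality $\binom{s-2}{r-2}\ge\binom r2$ holds for every $s\ge r+2$ (with equality at $s=r+2$), and for $r=2$ it holds for all $s$. So everything reduces to $s=r+1$ with $r\ge 3$, and there your proposal contains no proof.

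Every $e\in I$ has exactly $r-1$ sets $h$ with $e\subseteq h\subseteq S_e$, and all of them lie in $U$. Hence your claim that ``on average each $h\in U$ absorbs at most $r-1$ pairs'' is literally equivalent to $|U|\ge|I|$. That is the Hall inequality you are trying to prove, restated rather than advanced.

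Worse, the tools you name cannot be what closes the count. These are the distinctness of the $S_e$ and the fact that two $(r+1)$-sets share at most one $r$-set. Consider this configuration:
\begin{itemize}
\item take $r=3$ and $\cH=K_6^3$ on $\{\pm1,\pm2,\pm3\}$;
\item let $I$ be the $12$ non-antipodal pairs;
\item set $S_{\{p,q\}}=\{p,q,t,-t\}$, where $\pm t$ is the axis missed by $p$ and $q$.
\end{itemize}
These $12$ cliques are distinct. Yet the only sets $h$ with $e\subseteq h\subseteq S_e$ are the $8$ transversal triples, and each absorbs $3>r-1$ pairs. The inequality $|U|\ge|I|$ holds here only because of the $12$ triples through an antipodal pair. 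Those triples lie in some $S_f$ but are natural for no pair.

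So any proof must manufacture extra hyperedges of $\cH$ through the core edges, coming from the cliques of \emph{other} core edges. Your fallback (minimizing overlaps and ``replacing'' some $S_e$) does not say how to do this.

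\textbf{How the paper handles $s=r+1$.} It supplies exactly this mechanism through a degree split at $\binom r2$.
\begin{itemize}
\item Core edges contained in more than $\binom r2$ hyperedges of $\cH$ satisfy Hall among themselves by your own double count, so they are matched first.
\item The remaining core edges $a$ are processed greedily. Each one takes an unused $r$-subset of $S_a$ containing $a$.
\item Suppose $a$ gets stuck. Then each of the $r-1$ sets $S_a\setminus u$, for $u\in S_a\setminus a$, was taken by another core edge whose clique has the form $(S_a\setminus u)\cup u'$ with $u'\notin S_a$.
\item The $r$-subsets $(S_a\setminus\{u,v\})\cup u'$ of these neighbouring cliques are further hyperedges of $\cH$ through $a$.
\item Counting gives $(r-1)+2\binom{r-1}{2}=(r-1)^2>\binom r2$ hyperedges through $a$, which contradicts $a$ being low-degree.
\end{itemize}
If you write this out yourself, the final count needs care. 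You must check that the blocking $r$-sets really come from cliques of greedily processed core edges, since one could have been used in the initial Hall step. You must also check that enough of the $u'$ are distinct.
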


For Theorem \ref{main}, in the case $r = 2$ it is a well known result that $\ex(n,K_s,F)\le \ex_s(n,\textup{Berge-}F)$.
We can strengthen it in the following way: $\ex(n,K_s,F)\le \ex_r(n,K_s^r,\textup{Berge-}F)$. Indeed, consider the $r$-graph obtained by taking the $r$-cliques of an $F$-free $n$-vertex graph $G$. This hypergraph is clearly Berge-$F$-free, and every $s$-clique in $G$ turns into an $s$-clique in this hypergraph.

Note that the reverse statement of Theorem \ref{main} obviously does not necessarily hold. For example, for sufficiently large $s$ such that $K_s^r$ contains a copy of $\textup{Berge-}F$, then $\ex_r(n,K_s^r,\textup{Berge-}F)=0$.
But obviously, when $F$ has at least two edges, $\ex_s(n,\textup{Berge-}F)\geq |E(F)|-1>0$.
However, we can prove that the reverse statement holds under certain conditions.

\begin{prop}\label{prop: reverse statement}
   \textbf{(i)} If there is an $n$-vertex $s$-uniform hypergraph $\cG$ such that $|E(\cG)|=\ex_s(n,\textup{Berge-}F)$ and $\partial_r\cG$ is Berge-$F$-free, then $\ex_r(n,K_s^r,\textup{Berge-}F)=\ex_s(n,\textup{Berge-}F)$.

   \textbf{(ii)} If there is an $n$-vertex $s$-uniform hypergraph $\cG$ such that $|E(\cG)|=(1+o(1))\ex_s(n,\textup{Berge-}F)$ and $\partial_r\cG$ is Berge-$F$-free, then $\ex_r(n,K_s^r,\textup{Berge-}F)=(1+o(1))\ex_s(n,\textup{Berge-}F)$.
\end{prop}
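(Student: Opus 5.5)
The plan is to combine the upper bound of Theorem~\ref{main} with a matching lower-bound construction given directly by the hypothesized hypergraph $\cG$. Theorem~\ref{main} (valid for $s\ge r+1$) gives $\ex_r(n,K_s^r,\textup{Berge-}F)\le \ex_s(n,\textup{Berge-}F)$ in both parts. So only the lower bound remains, and the natural witness is the $r$-graph $\partial_r\cG$.

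First, $\partial_r\cG$ is an $n$-vertex $r$-graph which by hypothesis is Berge-$F$-free. It is therefore admissible for $\ex_r(n,K_s^r,\textup{Berge-}F)$, and
\[
\ex_r(n,K_s^r,\textup{Berge-}F)\ \ge\ \#\{\text{copies of }K_s^r\text{ in }\partial_r\cG\}.
\]
Second, every hyperedge $e\in E(\cG)$ is an $s$-set all of whose $r$-subsets lie in $\partial_r\cG$ by the definition of the $r$-shadow. Hence $\partial_r\cG[e]$ is a copy of $K_s^r$. Distinct hyperedges of $\cG$ are distinct $s$-sets, and a copy of $K_s^r$ is determined by its vertex set, so the map $e\mapsto \partial_r\cG[e]$ is injective. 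Thus $\partial_r\cG$ contains at least $|E(\cG)|$ copies of $K_s^r$.

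For part~(i), this gives $\ex_r(n,K_s^r,\textup{Berge-}F)\ge |E(\cG)|=\ex_s(n,\textup{Berge-}F)$, which together with Theorem~\ref{main} yields equality. For part~(ii), the same argument gives $\ex_r(n,K_s^r,\textup{Berge-}F)\ge (1+o(1))\ex_s(n,\textup{Berge-}F)$, and the upper bound from Theorem~\ref{main} gives the asymptotic equality.

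There is no real obstacle here. The only point needing care is that we count copies of $K_s^r$ by vertex set, so that the injectivity above is legitimate; a copy of a complete hypergraph has no other freedom than its vertex set. The genuine difficulty lies entirely in Theorem~\ref{main}, which we are allowed to assume.
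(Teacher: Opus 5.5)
Your proposal is correct and follows essentially the same route as the paper: each hyperedge of $\cG$ spans a copy of $K_s^r$ in the Berge-$F$-free $r$-graph $\partial_r\cG$, which gives the lower bound $|E(\cG)|$, and this is matched by the upper bound of Theorem~\ref{main}. Your injectivity remark, that a copy of $K_s^r$ is determined by its vertex set, spells out a point the paper leaves implicit.
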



\begin{proof}
    Observe that every hyperedge of $\cG$ induces a copy of $K_s^r$ in $\partial_r\cG$. It follows that $\cN(K_s^r,\partial_r\cG)\ge |E(\cG)|=\ex_s(n,\textup{Berge-}F)$. Since $\partial_r\cG$ is Berge-$F$-free, we have $\cN(K_s^r,\partial_r\cG)\leq \ex_r(n,K_s^r,\textup{Berge-}F)$. Thus, in the case of \textbf{(i)},
    $\ex_r(n,K_s^r,\textup{Berge-}F)\ge\ex_s(n,\textup{Berge-}F)$. Combined with the upper bound in Theorem \ref{main}, we obtain $\ex_r(n,K_s^r,\textup{Berge-}F)=\ex_s(n,\textup{Berge-}F)$. The case of \textbf{(ii)} follows similarly.
\end{proof}

Let us mention some cases where the above Proposition \ref{prop: reverse statement} gives a sharp or asymptotic result.
If $F$ is a clique of order $k>s$, then for sufficiently large $n$, $\ex_s(n,\textup{Berge-}F)$ is equal to the number of hyperedges in $T_s(n,k-1)$, which is equal to the number of copies of $K_s^r$ in $T_r(n,k-1)$, thus we obtain a sharp result in this case. If $F$ is a graph of chromatic number $k$, we analogously obtain an asymptotic result (and exact result in several cases, according to \cite{ger}).
Let $\textup{Berge-}P_k$ denote the Berge path with $k-1$ hyperedges.
If $k\geq s+1\geq r+2$, then $\ex_r(n,K_s^r,\textup{Berge-}P_k)=\ex_s(n,\textup{Berge-}P_k)$, and we know the exact value of this function \cite{CGHMZ}. If $T_\ell$ is a tree on $\ell+1$ vertices such that the Erd\H os–S\'os conjecture holds for all subtrees of $T_\ell$, then similar results hold by \cite{B4}.
Let Berge-$M_{k+1}$ denote the Berge matching with $k+1$ hyperedges.
If $2k \geq s \geq r+1$, then then $\ex_r(n,K_s^r,\textup{Berge-}M_{k+1})=\ex_s(n,\textup{Berge-}M_{k+1})$.
The exact value of Berge matching is given by \cite{Kang2022The, Wang2025The}.

Let us consider the connected generalized Tur\'an number, i.e. the maximum number of copies of $K_s^r$ in a connected hypergraph forbidden a copy of $\textup{Berge-}F$. Observe that Theorem \ref{main} does not extend to this setting. Moreover, if we look at the proof of Theorem~\ref{main}, it goes by showing that for a Berge-$F$-free $r$-uniform hypergraph $\cH$, the $s$-uniform hypergraph $\cH'$ consisting of the $s$-cliques of $\cH$ is also Berge-$F$-free. However, we cannot extend this prof to the connected case, since
even if $\cH$ is connected, $\cH'$ need not be connected.

Let $\ex_r^{conn}(n, \cF)$ be the maximum number of hyperedges in a connected $r$-uniform hypergraph on $n$ vertices that contains no $\cF$ as a subhypergraph.
Gy\H{o}ri, Methuku, Salia, Tompkins and Vizer \cite{GMSTV} obtained bounds for $\ex_r^{conn}(n,\textup{Berge-}P_{k})$. 
Then F\"{u}redi, Kostochka and Luo \cite{FKL2} determined the exact value of $\ex_r^{conn}(n,\textup{Berge-}P_{k})$ when $k\ge 4r+1\geq 13$ and for all sufficiently large $n$.
The condition on $k$ was improved to $k \geq 2r + 14 \geq 20$ by Gy\H{o}ri, Salia and Zamora \cite{GSZ}.
Recently, Zhao, Gerbner and Zhou further improved the threshold of $k$ to $k \geq 2r+2 \geq 8$.

\begin{thm}\label{thm: conn path}
    For integers $k\geq 2r+2\geq 8$, and sufficiently large $n$, we have
    $$\ex_r^{conn}(n,\textup{Berge-}P_k)=\binom{\lf k/2 \rf-1}{r-1}(n-\lc k/2 \rc)+\binom{\lc k/2 \rc }{r}.$$
\end{thm}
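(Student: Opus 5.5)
We prove the lower bound by a construction and the upper bound by a stability argument followed by exact cleaning. Throughout let $t=\lf k/2\rf-1$.

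\emph{Lower bound.} Fix a set $B$ of $\lc k/2\rc$ vertices and a subset $A\subset B$ with $|A|=t$. The hypergraph $\cH_0$ consists of:
\begin{itemize}
\item all $r$-subsets of $B$;
\item for every vertex $v\notin B$, all hyperedges $\{v\}\cup T$ with $T\in\binom{A}{r-1}$.
\end{itemize}
$\cH_0$ is connected and has exactly $\binom{t}{r-1}(n-\lc k/2\rc)+\binom{\lc k/2\rc}{r}$ hyperedges.

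To see that it is Berge-$P_k$-free, consider the core path. Any two consecutive core vertices outside $B$ would have to lie in a common hyperedge, but no hyperedge contains two vertices outside $B$. So the outside vertices on the core are separated by vertices of $A$. Every hyperedge meeting $V\setminus B$ meets $B$ only inside $A$, so each step between an outside vertex and $B$ passes through $A$. A parity count then shows the core contains at most $k-1$ vertices. This is exactly the count used in the graph case (Kopylov's construction), and I would carry it out carefully, separating $k$ even and $k$ odd.

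\emph{Upper bound.} Let $\cH$ be connected and Berge-$P_k$-free with at least the above number of hyperedges. I would proceed in three steps.
\begin{enumerate}
\item Pass to a graph. Following Lemma~\ref{gemupa}, choose for each hyperedge either a unique pair it contains, or classify it as ``blue'' when no such choice is possible. Blue hyperedges contribute to the count of $K_r$'s in a $P_k$-free graph $G$. Hence $|E(\cH)|\le e(G_{\rm red})+\cN(K_r,G)$, where $G$ is connected in the relevant component sense and $P_k$-free.
\item Apply the connected results for paths. Kopylov's connected theorem and the connected generalized Tur\'an results for $\cN(K_r,\cdot)$ in $P_k$-free graphs imply that for large $n$, near-extremality forces $G$ to be close to the Kopylov-type structure. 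Concretely, there is a set $S$ of exactly $t$ vertices of linear degree such that all but $O(1)$ hyperedges of $\cH$ consist of one vertex outside $S$ together with $r-1$ vertices of $S$.
\item Clean up exactly. Suppose some hyperedge avoids this pattern outside a bounded core. Since $S$ has $t$ vertices of huge degree, we can route a Berge path through $S$, alternating with fresh outside vertices using $r-1$-sets in $S$ that are still unused. Extending such a path by the offending hyperedge gives a Berge-$P_k$. Thus every hyperedge not of the form $\{v\}\cup T$ lies inside a set $B\supseteq S$ of bounded size. A final local optimization then shows $|B|\le\lc k/2\rc$, and the count is at most the claimed one.
\end{enumerate}

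\emph{Main obstacle.} The hardest part will be the embedding in step 3 near the threshold $k=2r+2$. There $t=\lf k/2\rf-1$ is only slightly larger than $r-1$, so the Berge path must use distinct hyperedges $\{v\}\cup T$, and there are barely enough distinct $(r-1)$-subsets $T\subset S$ to do this. I would first try a Hall-type argument: assign to each consecutive pair $(s_i,s_{i+1})$ on the path a distinct hyperedge containing both, using that each pair in $S$ lies in linearly many hyperedges. The delicate case is the bounded part $B$, where a hyperedge may contain two vertices of $S$ and one vertex of $B\setminus S$. For that case I would follow the Füredi–Kostochka–Luo longest-path rotation argument, using the hypothesis $k\ge 2r+2$ exactly to guarantee enough room for the rotations.
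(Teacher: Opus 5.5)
The paper does not prove this statement. It quotes it, attributing the range $k\ge 2r+2$ to Zhao, Gerbner and Zhou, and uses it as a black box in the proof of Theorem~\ref{thm: connect version}. So your proposal has to stand on its own.

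Your lower bound is correct. $\cH_0$ is the same hypergraph as in the lower-bound construction of Theorem~\ref{thm: connect version} with $s=r$: all $r$-sets with at least $r-1$ vertices in $A$, plus, for odd $k$, the sets $\{b_1,b_2\}\cup T'$ with $T'\in\binom{A}{r-2}$. Its $2$-shadow is already $P_k$-free, because $V\setminus A$ spans at most the single shadow edge $b_1b_2$. Hence a path has at most $2t+1$ (resp.\ $2t+2$) vertices, i.e.\ $k-1$.

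The upper bound has a genuine gap in Steps 1--2.

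\emph{Lemma~\ref{gemupa} does not preserve connectivity.} $G$ contains only the pairs covered by the maximum matching $M$, plus the shadows of hyperedges reachable from unmatched ones. So $G$ can be disconnected even when $\cH$ is connected: for a $3$-uniform loose path, $M$ may pick pairwise disjoint pairs. ``Connected in the relevant component sense'' is never defined or justified.

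\emph{Without connectivity the reduction forces no structure.} Over $P_k$-free $G$, the quantity $e(G_{blue})+\cN(K_r,G_{red})$ is at least $\frac{n}{k-1}\binom{k-1}{r}$ (disjoint all-red copies of $K_{k-1}$). For $r=3$, $k=8$ this is $5n$ against the target $3n-8$. So near-extremality of $\cH$ says nothing about $G$.

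\emph{Even connectivity would not rescue the exact statement at the threshold.} Take $k=2r+2$, so $t=r$. Let $G$ consist of an $r$-set $S$, complete and joined to all other vertices, with every edge blue. This $G$ is connected and $P_k$-free, and it gives $\binom{r}{2}+r(n-r)$, which exceeds the target $r(n-r)+1$. A blue edge $va$ only certifies that its matched hyperedge contains $\{v,a\}$. So no graph stability can yield your claim that all but $O(1)$ hyperedges have the form $\{v\}\cup T$ with $T\subset S$. Also, the connected results you cite are exact bounds for pure edge or clique counts, not stability results for this mixed quantity.

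Connectivity and structure must be handled inside $\cH$ itself. One model is the paper's proof of Theorem~\ref{thm: connect version}: classify components, use a Claim~\ref{clm: find path}-type statement plus a shortest connecting Berge path to show at most one component is rich, and delete the rest vertex by vertex at an average loss below $\binom{t}{r-1}$ per vertex. But that proof finishes by applying the very statement you are proving to the surviving component, so your base case still needs its own argument.

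Step 3 is also only a sketch.
\begin{enumerate}
\item Stability would give $o(n)$ exceptional hyperedges, not $O(1)$.
\item For odd $k$, a single off-pattern hyperedge such as $\{b_1,b_2\}\cup T'$ does not extend to a Berge-$P_k$, since $\cH_0$ contains it. The extension argument must therefore combine at least two such hyperedges.
\item The ``final local optimization'' is really the finite extremal inequality $e(\cH[B])\le\binom{t}{r-1}(|B|-\lceil k/2\rceil)+\binom{\lceil k/2\rceil}{r}$. Bounding $|B|$ alone does not give the count, and this inequality is where the real work and the hypothesis $k\ge 2r+2$ enter.
\end{enumerate}
Conversely, the obstacle you single out is not one. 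When routing through $S$ you can take a fresh outside vertex with full link at each step, so the hyperedges are automatically distinct.
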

We extend this to counting cliques. Note that we assume here $k\geq 2s+4$, but we believe a more careful analysis could extend this result to $k\ge 2s+2$.
\begin{thm}\label{thm: connect version}
    For integers $s> r\ge 3$, $k\geq 2s+4\ge 12$, and sufficiently large $n$, we have
    $$\ex_r^{conn}(n,K_s^r,\textup{Berge-}P_k)= \binom{\lf k/2 \rf-1}{s-1}\left( n-\lc k/2 \rc\right)+\binom{ \lc k/2 \rc}{s}.$$
\end{thm}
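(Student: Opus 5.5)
Lower bound. Take $A$ with $|A|=\lc k/2\rc$, split as $A=A_1\cup\{w\}$ with $|A_1|=\lf k/2\rf-1$ (so $|A|-|A_1|=1$ or $2$; if $k$ is odd we add a second special vertex). Let the hyperedges be all $r$-subsets of $A$, plus all $r$-sets consisting of $r-1$ vertices of $A_1$ and one vertex of $B=V\setminus A$. This hypergraph is connected. It is Berge-$P_k$-free, by the standard argument used for Theorem~\ref{thm: conn path}: any Berge path visits $B$ only at non-consecutive positions, and each such visit uses two hyperedges hitting $A_1$, so its core has at most $2|A_1|+2\le k-1$ vertices. The copies of $K_s^r$ are the $s$-subsets of $A$ and the sets $S\cup\{b\}$ with $S\in\binom{A_1}{s-1}$ and $b\in B$. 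This gives exactly $\binom{\lf k/2\rf-1}{s-1}(n-\lc k/2\rc)+\binom{\lc k/2\rc}{s}$.

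Upper bound, main reduction. Let $\cH$ be a connected Berge-$P_k$-free $r$-graph, and let $\cH'$ be the $s$-graph of its $s$-cliques. By the argument behind Theorem~\ref{main}, $\cH'$ is Berge-$P_k$-free. I would like to apply Theorem~\ref{thm: conn path} with $s$ in place of $r$; this is allowed since $k\ge 2s+4\ge 2s+2$. The difficulty is that $\cH'$ need not be connected. So I decompose $\cH'$ into its connected components $\cC_1,\dots,\cC_m$, with vertex sets $V_i$ of sizes $n_i$. These vertex sets are disjoint, except that vertices covered by no clique are isolated.

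For large components, $\ex_s^{conn}(n_i,\textup{Berge-}P_k)$ applies, giving at most roughly $\binom{\lf k/2\rf-1}{s-1}n_i$ hyperedges. For small components ($n_i$ bounded), I would use the Erd\H os--Gallai-type bound for Berge paths, $\ex_s(n,\textup{Berge-}P_k)\le \frac{n}{k}\binom{k}{s}$ when $k>s+1$. Since $\frac1k\binom ks$ can exceed $\binom{\lf k/2\rf-1}{s-1}$, a naive sum fails. This is where connectivity of $\cH$ must be used.

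Main obstacle. The key step is to use the connectivity of $\cH$ to show that the "dense" pieces, such as disjoint $K^s_{k-1}$-like blocks, cannot coexist in large numbers. Connecting many dense blobs through $r$-edges of $\cH$ creates long Berge paths. Concretely, I would first show that $\cH$ itself has a long Berge path structure unless most vertices lie in few components. Then I would apply the stability/structure from the proof of Theorem~\ref{thm: conn path}, or the Füredi--Kostochka--Luo approach: a connected Berge-$P_k$-free hypergraph with many edges contains a set $A_1$ of $\lf k/2\rf-1$ vertices such that almost all hyperedges meet $A_1$ in $r-1$ vertices. If $\cH$ has fewer than $c n$ edges for a suitable constant, then the number of $s$-cliques is also small, because every $s$-clique of a Berge-$P_k$-free graph is contained in a bounded-degree structure. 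Otherwise, the structure theorem gives the core $A_1$. Every $s$-clique containing a vertex outside a bounded set must then contain $s-1$ vertices of $A_1$, except for $O(1)$ exceptional cliques. A final counting argument, using $k\ge 2s+4$ to rule out extra cliques spanning two outside vertices (these would extend the path by two), yields the bound exactly. I expect this structural step, controlling the exceptional cliques near $A_1$, to be the hardest part.
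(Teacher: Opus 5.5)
Your lower bound is the paper's construction: take $S=A_1$, and for odd $k$ let your two extra vertices of $A$ play the role of $u_1,u_2$. Your clique count is correct.

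Only the freeness count slips, and only for even $k$, where $2|A_1|+2=k$. In that case no hyperedge has two vertices outside $A_1$ (treat $w$ like a vertex of $B$), so a core has at most $2|A_1|+1=k-1$ vertices. The bound $2|A_1|+2=k-1$ applies to odd $k$, where $w_1w_2$ is the only allowed consecutive pair outside $A_1$.

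The upper bound has a genuine gap. You correctly identify the obstacle: $\cH'$ may be disconnected, and summing $\ex_s$ bounds over components fails. But you never turn ``dense blobs cannot coexist'' into an argument. Instead you switch to a structure theorem for connected Berge-$P_k$-free $r$-graphs, which is neither proved nor available. The dichotomy built on it fails as stated.
\begin{itemize}
\item Linearly many edges do not force a core. Copies of $K^r_{\lf k/2\rf}$ all sharing one vertex form a connected Berge-$P_k$-free $r$-graph with $\Theta(n)$ edges. In it, almost every hyperedge meets any bounded set in at most one vertex, while $r-1\ge 2$.
\item So your threshold $c$ must exceed the edge density of every such coreless example. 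That means you need a genuine stability theorem near the extremal value.
\item You also need a sharp comparison: having close to $\binom{\lf k/2\rf-1}{s-1}n$ copies of $K_s^r$ must force close to $\binom{\lf k/2\rf-1}{r-1}n$ hyperedges. The remark about ``bounded-degree structures'' supplies neither this nor the stability theorem.
\item Even granting a core $A_1$, ``all but $O(1)$ exceptional cliques'' cannot give an exact formula. That exact count is precisely the step you leave open.
\end{itemize}

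The missing idea is to stay in $\cH'$ and use the connectivity of $\cH$ only through a long-path lemma, as the paper does.
\begin{itemize}
\item Call a component of $\cH'$ nice if its minimum degree is at least $\binom{\lf k/2\rf-1}{s-1}$ and it has no hanging block of size $\lf k/2\rf$.
\item In a nice component, greedy extension via Lemma~\ref{lem: berge star} plus a rotation argument gives a Berge path of length at least $\lf k/2\rf$ from every vertex. If the rotation fails, it produces a hanging block.
\item The proof of Theorem~\ref{main} lifts a Berge path of $\cH'$ inside a component $V_i$ to one in $\cH[V_i]$ with the same core. So two such pieces joined by a shortest Berge path of $\cH$ would yield a Berge-$P_k$.
\item Now peel off low-degree vertices and hanging blocks one by one. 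Each peeled vertex costs on average at most $\binom{\lf k/2\rf-1}{s-1}-1$ cliques, and at most one piece survives.
\item Apply Theorem~\ref{thm: conn path} with uniformity $s$ to the surviving piece, or a trivial bound if it is small. This gives the exact bound.
\end{itemize}
No structural statement about $\cH$ itself is needed.
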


This paper is organized as follows.
In Section \ref{sec: sharp generalized Turan number}, we give the proof of Theorem \ref{thm: sharp result} and the proof of Theorem \ref{thm: eventurally Berge Turan good}.
In Section \ref{sec: generalized Turan number}, we give the proof of Theorem \ref{main} and the proof Theorem \ref{thm: connect version}.

\section{Proof of Theorem \ref{thm: sharp result} and Theorem \ref{thm: eventurally Berge Turan good}}\label{sec: sharp generalized Turan number}

Let us start with a result of F\"{u}redi, Kostochka and Luo \cite{Furedi2019Avoiding} which was proved independently by Gerbner, Methuku and Palmer \cite{B4}.
Before stating the result, we need some definitions.
Let $G$ be a $2$-edge-colored graph with colors red and blue.
We denote the graph spanned by the red edges by $G_{red}$ and the graph spanned by blue edges by $G_{blue}$.
Let $\cN(H, G)$ be the number of copies of $H$ in the graph $G$.
\begin{lem}\label{gemupa}
    Let $\cH$ be an $r$-uniform Berge-$F$-free hypergraph.
    Then there exists a $F$-free red-blue graph $G$ such that
    \begin{equation*}
        e(\cH) \leq e(G_{blue}) + \cN(K_r, G_{red}).
    \end{equation*}
\end{lem}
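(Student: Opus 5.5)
We prove Lemma~\ref{gemupa} with a maximum matching argument in an auxiliary bipartite graph. The idea is to assign each hyperedge of $\cH$ to one of its own pairs wherever we can do so injectively. The assigned pairs become blue edges, while the hyperedges that cannot be assigned are represented by red cliques.

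Let $B$ be the bipartite graph whose first class is $E(\cH)$ and whose second class is the set of pairs $\{u,v\}\subseteq V(\cH)$. A hyperedge $h$ is joined to $\{u,v\}$ whenever $\{u,v\}\subseteq h$. Fix a maximum matching $M$ in $B$. We colour as follows.
\begin{itemize}
\item Every pair matched by $M$ is a blue edge.
\item For every hyperedge $h$ left unmatched by $M$, all $\binom{r}{2}$ pairs inside $h$ are red edges.
\end{itemize}
A pair that would get both colours needs attention; we show below that this cannot happen. Let $G$ be the union of these edges. The bound is then immediate. Matched hyperedges correspond injectively to blue edges, so there are at most $e(G_{blue})$ of them. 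Each unmatched hyperedge $h$ spans a $K_r$ in $G_{red}$, and distinct hyperedges give distinct $r$-sets, so there are at most $\cN(K_r,G_{red})$ unmatched hyperedges.

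Next we show that the colours are disjoint and record the key alternating-path property. Suppose $h$ is unmatched and $\{u,v\}\subseteq h$. If $\{u,v\}$ were unmatched, adding the edge $h\{u,v\}$ would enlarge $M$. Hence every pair inside an unmatched hyperedge is matched, so it is also blue. We therefore refine the colouring: a matched pair is red if it lies in some unmatched hyperedge, and blue otherwise. The counting survives this change. Red pairs are no longer counted as blue, but every unmatched hyperedge still spans a red $K_r$, and every matched hyperedge is matched to a blue pair or to a red pair.

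This forces a different counting. We count a hyperedge matched to a red pair together with the unmatched hyperedges, and we must still charge it to a distinct $r$-clique of $G_{red}$. This is the main obstacle. The charging step is to use the matching structure, namely alternating paths from unmatched hyperedges, and pick the edge set by taking $M$ maximum with a suitable extremal choice. My first attempt would be the standard choice from the cited works of F\"uredi--Kostochka--Luo and Gerbner--Methuku--Palmer: call $h$ \emph{red} if it contains fewer than $\binom r2$ pairs lying in at most ... hyperedges. Concretely, I would define $G_{red}$ as the set of pairs contained in at least $\binom r2$ hyperedges (the ``fat'' pairs). Each hyperedge all of whose pairs are fat is charged to the $K_r$ it spans. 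Every other hyperedge contains a thin pair, contained in fewer than $\binom r2$ hyperedges, and Hall's condition lets us match these hyperedges to distinct thin pairs, which form the blue graph. I would switch to this threshold construction for the actual proof.

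It remains to show that $G$ is $F$-free, and this follows directly from Hall-type greediness. Suppose $G$ contains a copy of $F$. Assign to its blue edges their matched hyperedges, which are distinct. Then process the red (fat) edges one at a time. Each fat pair lies in at least $\binom r2\ge$ enough hyperedges, so we can always choose a fresh hyperedge containing it: the ones already used number at most $|E(F)|-1$. This requires the threshold to be $|E(F)|$ rather than $\binom r2$. Taking the threshold to be $\max\{|E(F)|,\binom r2\}$ works for both parts of the argument, and the resulting system of distinct hyperedges is a Berge-$F$ in $\cH$, a contradiction. The step most likely to need care is the Hall argument for the thin pairs. For a set $S$ of non-fat-only hyperedges, the thin pairs they contain must number at least $|S|$. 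This holds because each thin pair lies in fewer than the threshold number of hyperedges, while each hyperedge of $S$ contains at least one thin pair. Double counting therefore gives only a ratio bound, and I expect to resolve it by simply assigning each such hyperedge greedily to a thin pair and placing any overflow hyperedges into the blue count via the multiplicity bound.
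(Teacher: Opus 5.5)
Your final construction (red $=$ pairs lying in at least $T=\max\{|E(F)|,\binom{r}{2}\}$ hyperedges, blue $=$ thin pairs matched to the remaining hyperedges) fails at exactly the step you flag at the end. The hyperedges that are not entirely fat cannot in general be matched to distinct thin pairs, because a thin pair may lie in up to $T-1$ hyperedges all of whose other pairs are fat.

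Concretely, let $r=3$ and $F=K_4$, so $T=6$. Take the triples $uvw_i$ ($1\le i\le 5$) together with $uw_ix_{ij}$ and $vw_iy_{ij}$ ($1\le i,j\le 5$) on new vertices $x_{ij},y_{ij}$.
\begin{itemize}
\item The shadow is $K_4$-free, so $\cH$ is Berge-$K_4$-free.
\item The pair $uv$ lies in $5<T$ triples, while each $uw_i$ and $vw_i$ lies in exactly $6$.
\item The five triples $uvw_i$ have $uv$ as their only thin pair. So at most one of them receives a blue edge, and none of them spans a red triangle.
\item The red graph is $K_{2,5}$, and at most $51$ thin pairs can receive distinct hyperedges.
\end{itemize}
Hence your $G$ gives $e(G_{blue})+\cN(K_3,G_{red})\le 51<55=e(\cH)$.

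Your proposed repair, sending overflow hyperedges into the blue count via the multiplicity bound, only yields $e(\cH)\le (T-1)e(G_{blue})+\cN(K_r,G_{red})$, which is not the lemma. Recolouring such a pair red does not help either: it would destroy your greedy $F$-freeness argument, which needs every red pair to lie in at least $|E(F)|$ hyperedges.

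The missing idea is the one you mentioned and then abandoned in your first attempt. Keep a maximum matching $M$ between hyperedges and pairs, and colour as follows:
\begin{itemize}
\item red: every pair reachable from an $M$-unmatched hyperedge by an $M$-alternating path, not just the pairs inside unmatched hyperedges;
\item blue: the remaining $M$-matched pairs;
\item discarded: unmatched pairs.
\end{itemize}
Every reachable pair is matched, since otherwise the alternating path would be augmenting. So every edge of $G$, of either colour, has its own $M$-partner hyperedge containing it, and $F$-freeness is immediate with no threshold at all.

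For the count, a hyperedge matched to a red pair is itself reachable, so all of its pairs are reachable and hence red. Thus the unmatched hyperedges and those matched to red pairs span distinct red copies of $K_r$, while the hyperedges matched to blue pairs number at most $e(G_{blue})$. This is the argument the paper gives in the proof of Lemma~\ref{cele}\,(ii), which specialises to this lemma when the counted hypergraph is a single hyperedge.
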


Before giving the proof of Theorem \ref{thm: sharp result}, we give a lemma which generalizes Lemma \ref{gemupa} to counting the number of copies of a hypergraph $\cH$.
We remark that \textbf{(ii)} in Lemma \ref{cele} implies the upper bound in \textbf{(i)}, but we present a separate, simpler proof of the upper bound in \textbf{(i)}, because we will use the same proof in the proof of Theorem \ref{thm: sharp result}. For that reason we add an unnecessary complication to the proof: $\min \{|E(F)|, \binom{r}{2}\}-1$ could be replaced by any constant in the proof of \textbf{(i)} of Lemma \ref{cele}, but not in the proof of \textbf{(i)} of Theorem \ref{thm: sharp result}.

\begin{lem}\label{cele}
    \textbf{(i)} For any integers $r$ and $n$, graph $F$ and $r$-graph $\cH$, $\ex(n,\partial_2 \cH,F)\gamma(\cH)\le \ex_r(n,\cH,\textup{Berge-}F)=\ex(n,\partial_2 \cH,F)\gamma(\cH)+O(n^{|V(\cH))|+2-r})$.

    \textbf{(ii)} For any integers $r$ and $n$, graph $F$ and $r$-graph $\cH$, there is an $n$-vertex $F$-free red-blue graph $G$ such that $\ex_r(n,\cH,\textup{Berge-}F)$ is at most $\cN(\partial_2\cH,G_{red})\gamma(\cH)+O(n^{|V(\cH)|-r}|E(G_{blue})|)$.
\end{lem}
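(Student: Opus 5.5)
The plan is to prove the lower bound in \textbf{(i)} by a direct construction, and then to obtain both upper bounds from a single red-blue decomposition of an extremal Berge-$F$-free hypergraph.

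\textbf{Lower bound.} Take an $n$-vertex $F$-free graph $G$ with $\cN(\partial_2\cH,G)=\ex(n,\partial_2\cH,F)$. Let $\cG$ be the $r$-graph whose hyperedges are the $r$-cliques of $G$.
\begin{itemize}
\item $\cG$ is Berge-$F$-free. A Berge-$F$ in $\cG$ has its core inside $G$, because every pair inside a hyperedge of $\cG$ is an edge of $G$. So a Berge-$F$ would give a copy of $F$ in $G$.
\item Each copy of $\partial_2\cH$ in $G$ spans $r$-cliques, and each hyperedge of $\cH$ is a clique in $\partial_2\cH$. So every one of the $\gamma(\cH)$ copies of $\cH$ with this fixed shadow lies in $\cG$.
\item Distinct shadows give distinct copies of $\cH$.
\end{itemize}
Hence $\cG$ contains at least $\ex(n,\partial_2\cH,F)\gamma(\cH)$ copies of $\cH$.

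\textbf{Decomposition for the upper bounds.} Let $\cG$ be an $n$-vertex Berge-$F$-free $r$-graph. Run the greedy procedure from the proof of Lemma~\ref{gemupa}. Set $t=\min\{|E(F)|,\binom r2\}$. Call a pair of vertices \emph{fat} if it lies in at least $t$ hyperedges of $\cG$, and red-colour all fat pairs. For each hyperedge in which every pair is fat, assign it to itself as a red $r$-clique. For each remaining hyperedge, choose one non-fat pair in it and colour that pair blue. Then:
\begin{itemize}
\item The blue pairs are non-fat, so each lies in fewer than $t$ hyperedges. Hence the number of hyperedges containing a blue pair is at most $(t-1)|E(G_{blue})|$.
\item The graph $G=G_{red}\cup G_{blue}$ is $F$-free. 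Given a copy of $F$ in $G$, embed its edges greedily into distinct hyperedges. Red pairs lie in at least $t\ge|E(F)|$ hyperedges, so enough distinct ones remain. For blue pairs we use the chosen hyperedge, which is distinct from everything else because that hyperedge is not fully red. This is exactly where the threshold $\min\{|E(F)|,\binom r2\}$ is needed. If instead $t=\binom r2<|E(F)|$, I expect to have to fall back on the Lemma~\ref{gemupa} argument, i.e.\ a maximal matching between pairs and hyperedges, rather than the simple greedy embedding.
\end{itemize}

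\textbf{Counting copies of $\cH$.} Split the copies of $\cH$ in $\cG$ into two types.
\begin{itemize}
\item Copies all of whose hyperedges are fully red. Their shadow is a copy of $\partial_2\cH$ in $G_{red}$. Each such shadow supports at most $\gamma(\cH)$ copies of $\cH$, giving at most $\cN(\partial_2\cH,G_{red})\gamma(\cH)$.
\item Copies containing a hyperedge $h$ that is not fully red. Then $h$ contains a blue pair. The number of choices of $h$ is $O(|E(G_{blue})|)$, and the remaining $|V(\cH)|-r$ vertices can be chosen in $O(n^{|V(\cH)|-r})$ ways. This gives $O(n^{|V(\cH)|-r}|E(G_{blue})|)$.
\end{itemize}
This proves \textbf{(ii)}.

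For the upper bound in \textbf{(i)}, use $\cN(\partial_2\cH,G_{red})\le\ex(n,\partial_2\cH,F)$, since $G_{red}\subseteq G$ is $F$-free. Also $|E(G_{blue})|\le\binom n2$, so the error term is $O(n^{|V(\cH)|+2-r})$.

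The main obstacle is showing that $G$ is $F$-free while keeping the blue edges controlled, specifically guaranteeing distinct hyperedges for blue pairs versus red pairs. I will handle it by ensuring every hyperedge that witnesses a blue pair is not counted as a red clique.
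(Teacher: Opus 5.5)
The lower bound and your counting of copies of $\cH$ are fine. For $|E(F)|\le\binom r2$ the $F$-freeness argument also works, but not for the reason you give. A hyperedge $h(p)$ witnessing a blue pair $p$ can contain red pairs, so being ``not fully red'' does not stop red pairs from using it. What works is to reserve the hyperedges $h(p)$ first; they are distinct because each hyperedge chose only one pair. Then each red pair lies in at least $|E(F)|$ hyperedges, so it can avoid the fewer than $|E(F)|$ already used.

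The genuine gap is the case $|E(F)|>\binom r2$, which you leave open. There your colouring itself can contain $F$, so no embedding argument, matching-based or not, can rescue it. Take $r=3$, $F=K_5$, and let $\cG$ be the six triples of $\{0,1,2,3,4\}$ containing $0$. Each pair $0i$ lies in three triples, so it is red. Each pair $ij$ with $1\le i<j\le 4$ lies only in $0ij$ and is the unique non-fat pair there, so it is forced to be blue. Thus $G=K_5$, while $\cG$ has six hyperedges and is trivially Berge-$K_5$-free. The blue pairs reserve exactly the hyperedges the red pairs were counting on. So (ii) is unproved in this regime. So is the upper bound in (i), since you deduce $F$-freeness of $G_{red}$ from that of $G$.

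For (i), the repair is the paper's argument: $G_{red}$ alone is $F$-free by Hall's theorem. A set $A'$ of red edges of a copy of $F$ has at least $\binom r2|A'|$ incidences with hyperedges, and each hyperedge contains at most $\binom r2$ pairs, so $A'$ meets at least $|A'|$ hyperedges.

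For (ii), the paper changes the colouring rather than the embedding. Take a maximum (not merely maximal) matching $M$ between the pairs of $\partial_2\cG$ and the hyperedges of $\cG$. Colour red the pairs reachable from $M$-unmatched hyperedges by $M$-alternating paths, and colour blue the remaining matched pairs. Then $M$ itself supplies distinct hyperedges for all edges of $G$, so $G$ is $F$-free. Maximality of $M$ forces every hyperedge not matched to a blue pair to have all its pairs red.

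Alternatively, inside your own scheme you can use the threshold $t=|E(F)|$ for every $F$. Your greedy embedding then shows $G$ is $F$-free in all cases. The non-fully-red hyperedges number at most $(|E(F)|-1)|E(G_{blue})|$, which is still $O(|E(G_{blue})|)$ for fixed $F$. The price is that the implied constants depend on $|E(F)|$, whereas the paper's matching argument gives a constant in (ii) depending only on $\cH$.
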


\begin{proof}[Proof of Lemma \ref{cele}]
For the lower bound, consider an $F$-free $n$-vertex graph $G$, and form an $r$-graph by taking the $r$-cliques of $G$ as hyperedges. This is clearly Berge-$F$-free. Each copy of $\partial_2 \cH$ in $G$ creates $\gamma(\cH)$ copies of $\cH$ in the resulting hypergraph, completing the proof of the lower bound.

    For the upper bound in \textbf{(i)}, let $\cG$ be an $n$-vertex Berge-$F$-free $r$-graph with the largest number of copies of $\cH$. Let $\cG'$ be obtained by deleting the hyperedges of $\cG$ that contain an edge that is contained in at most $\min \{|E(F)|, \binom{r}{2}\}-1$ 
    hyperedges. This way we deleted $O(n^{2})$ hyperedges, since we can count each deleted hyperedge at least once by picking an edge $\binom{n}{2}$ ways, and then $r-2$ further vertices at most $\min \{|E(F)|, \binom{r}{2}\}-1$ ways. The number of copies of $\cH$ we deleted is $O(n^{|V(\cH))|+2-r})$, since we can count the copies of $\cH$ by picking a deleted hyperedge $O(n^{2})$ ways, and then the rest of the vertices $O(n^{|V(\cH)|-r})$ ways.

We claim that $\partial_2 \cG'$ is $F$-free. Indeed, if there is a copy of $F$ in $\partial_2 \cG'$, then we will show that this copy is the core of a Berge-$F$ in $\cG'$. If $|E(F)|\le \binom{r}{2}$, then each edge in $\partial_2 \cG'$ is contained in at least $|E(F)|$ hyperedges of $\cG$. We go through the edges of our copy of $F$ in an arbitrary order, and for each edge, we pick a hyperedge of $\cG$ containing that edge. We pick these hyperedges arbitrarily, except that we always pick a new hyperedge. This is doable, since at each edge of our copy of $F$, we have picked less than $|E(F)|$ hyperedges earlier, thus there is a new hyperedge to pick. This gives us a Berge-$F$, a contradiction. If $|E(F)|\ge \binom{r}{2}$, then each edge in $\partial_2 \cG'$ is contained in at least $\binom{r}{2}$ hyperedges of $\cG$. We consider an auxiliary bipartite graph, with part $A$ consisting of the edges in our copy of $F$, part $B$ consisting of the hyperedges of $\cG'$, and we add an edge $ab$ for $a\in A$, $b\in B$ if $a$ is a subset of $b$. A matching covering $A$ would give us a Berge-$F$. That cannot exist, thus Hall's condition is violated, there is a set $A'\subseteq A$ with the neighborhood $B'$ in $B$  having fewer than $|A'|$ vertices. The number of edges between $A'$ and $B'$ is at least $|A'|\binom{r}{2}$ and at most $|B'|\binom{r}{2}$, a contradiction.

    Clearly, each copy of $\cH$ in $\cG'$ creates a copy of $\partial_2 \cH$ in $\partial_2 \cG'$, and each copy of $\partial_2 \cH$ is counted at most $\gamma(\cH)$ times this way. Therefore, $\cN(\cH,\cG')\le \cN(\partial_2\cH,\partial_2 \cG')\gamma(\cH)\le \ex(n,\partial_2 \cH,F)\gamma(\cH)$. This proves the upper bound for \textbf{(i)}.

    Let us continue with the proof of \textbf{(ii)}. Let $\cG$ be an $n$-vertex Berge-$F$-free $r$-graph. Consider the following auxiliary bipartite graph $H$. Part $A$ consists of the edges in $\partial_2 \cG$, part $B$ consists of the hyperedges of $\cG$, and a vertex $b\in B$ is joined to a vertex $a\in A$ if $b$ contains $a$.

    Consider a largest matching $M$ in $H$. We say that a path in $H$ is an \textit{alternating path} if it starts with an edge not in $M$ and alternates between edges in $M$ and edges not in $M$. Let $B_1$ denote the set of hyperedges not covered by $M$, and let $A_2$ denote the set of vertices in $A$ that can be reached from a vertex of $B_1$ by an alternating path. Observe that the vertices of $A_2$ are each covered by $M$. We color the edges in $A_2$ red, and the other vertices of $A$ covered by $M$ are colored blue.

    The resulting red-blue graph $G$ is $F$-free. Indeed, for the edges of a copy of $F$ the matching $M$ provides the hyperedges that form a Berge-$F$.

    Consider a copy of $\cH$ in $\cG$. Consider first the copies of $\cH$ that contain a hyperedge in $H$ that is matched in $M$ to a blue edge. Such copies can be obtained by picking a blue edge first, $|E(G_{blue})|$ ways, then the corresponding hyperedge one way, then the rest of the vertices $O(n^{|V(H)|-r})$ ways.

The rest of the hyperedges in $\cG$ are either in $B_1$ or can be reached from $B_1$ by an alternating path. Therefore, in $H$ they are not joined to a vertex not covered by $M$ (since that would create a larger matching), nor to a blue edge (since that blue edge could be reached from $B_1$ by an alternating path, thus would be in $A_2$ and colored red). Therefore, each such hyperedge is joined only to red edges in $H$, i.e., its shadow is a clique in $G_{red}$. If a copy of $\cH$ consist of such hyperedges, then its shadow is a copy of $\partial_2\cH$ in $G_{red}$. Each copy of $\partial_2\cH$ is counted at most $\gamma(\cH)$ times, completing the proof.
\end{proof}

Next present the proof of Theorem \ref{thm: sharp result}, which gives some cases where the lower bound in Lemma \ref{cele} is sharp. 
We restate Theorem \ref{thm: sharp result} here for convenience.

\medskip

\noindent
\textbf{Theorem.}
 \textbf{(i)}   \textit{If $\cH$ is nice with respect to $F$, then $\ex_r(n,\cH,\textup{Berge-}F)=\ex(n,\partial_2 \cH,F)\gamma(\cH)$.}

 \textbf{(ii)}  \textit{If $\partial_2\cH$ is weakly $K_k$-Tur\'an-stable, then for sufficiently large $n$, $\ex_r(n,\cH,\textup{Berge-}K_k)=\ex(n,\partial_2 \cH,K_k)\gamma(\cH)$.
}

\begin{proof}  

    Let $\cG$ be an $n$-vertex Berge-$F$-free $r$-graph with the largest number of copies of $\cH$. As in the proof of \textbf{(i)} Lemma \ref{cele}, let $\cG'$ be obtained by deleting the hyperedges of $\cG$ that contain an edge that is contained in at most $\min \{|E(F)|, \binom{r}{2}\}-1$ hyperedges. This way we deleted $O(n^{2})$ hyperedges, since we can count each deleted hyperedge at least once by picking an edge $\binom{n}{2}$ ways, and then $r-2$ further vertices at most $\min \{|E(F)|, \binom{r}{2}\}-1$ ways. The number of copies of $\cH$ we deleted is $O(n^{|V(\cH)|+2-r})$, since we can count the copies of $\cH$ by picking a deleted hyperedge $O(n^{2})$ ways, and then the rest of the vertices $O(n^{|V(\cH)|-r})$ ways.
    If $\cH$ is nice with respect to $F$, then the deleted hyperedges
    are not contained in any copy of $\cH$, thus clearly $\cN(\cH,\cG)=\cN(\cH,\cG')$.

    We claim that $\partial_2 \cG'$ is $F$-free as in the proof of \textbf{(i)} Lemma \ref{cele}. 

    Clearly, each copy of $\cH$ in $\cG'$ creates a copy of $\partial_2 \cH$ in $\partial_2 \cG'$, and each copy of $\partial_2 \cH$ is counted at most $\gamma(\cH)$ times this way. Therefore, $\cN(\cH,\cG')\le \cN(\partial_2\cH,\partial_2 \cG')\gamma(\cH)\le \ex(n,\partial_2 \cH,F)\gamma(\cH)$. This proves the upper bound for \textbf{(i)}.

    Let us turn to the proof of \textbf{(ii)}. If $\cN(\partial_2\cH,\partial_2 \cG')\le \ex(n,\partial_2 \cH,K_k)-\gamma(\cH)\cN(\cH,\cG-\cG')$, then we are done. Otherwise, by the weakly $K_k$-Tur\'an-stable property and using that $\gamma(\cH)\cN(\cH,\cG-\cG')=O(n^{|V(\cH))|+2-r})$, there is an $n$-vertex complete $(k-1)$-partite graph $T$ that can be obtained from $\partial_2\cG'$ by adding and removing $o(n^2)$ edges.

  \begin{clm}
      We can choose $T$ such a way that for its parts $V_1,\dots, V_{k-1}$, for any $i,j\le k-1$, $v\in V_i$ cannot have in $\partial_2\cG'$ more neighbors inside $V_i$ than inside $V_j$.
  \end{clm}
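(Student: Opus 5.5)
We start from the complete $(k-1)$-partite graph $T$ given by weak Turán-stability, with parts $V_1,\dots,V_{k-1}$, such that $|E(\partial_2\cG')\,\triangle\, E(T)|=o(n^2)$. Among all partitions of $V(\cG)$ into $k-1$ parts, we consider the one minimizing the number of edges of $\partial_2\cG'$ lying inside parts. This is the standard ``max-cut'' choice. We take $T'$ to be the complete $(k-1)$-partite graph on this optimal partition.

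The claim follows from a local move argument. Suppose some $v\in V_i$ has more $\partial_2\cG'$-neighbors in $V_i$ than in $V_j$. Moving $v$ from $V_i$ to $V_j$ decreases the number of edges inside parts by $d_{V_i}(v)-d_{V_j}(v)>0$. This contradicts minimality. So the minimizing partition has the required property. Since the vertex set is finite, such a minimizer exists; equivalently, we could start from the partition of $T$ and repeatedly perform improving moves until none exists, which terminates because the number of inside edges strictly decreases each time.

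It remains to check that $T'$ is still obtainable from $\partial_2\cG'$ by adding and removing $o(n^2)$ edges. For the original partition of $T$, the number of edges of $\partial_2\cG'$ inside parts is at most $|E(\partial_2\cG')\setminus E(T)|=o(n^2)$. By minimality, the new partition also has only $o(n^2)$ edges of $\partial_2\cG'$ inside parts. We also need the number of \emph{missing} cross edges to be $o(n^2)$. Here we count edges. The graph $\partial_2\cG'$ has $e(T)-o(n^2)$ edges, and at most $o(n^2)$ of them lie inside the new parts. So the number of cross edges of $\partial_2\cG'$ for the new partition is at least $e(T)-o(n^2)$. This number is at most $e(T')$, so $e(T')\ge e(T)-o(n^2)$. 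The missing cross edges number $e(T')$ minus the present cross edges. To see this is $o(n^2)$, we need $e(T')\le e(T)+o(n^2)$. For this we use that $\partial_2\cG'$ is $K_k$-free (shown above), so $e(T)\ge e(\partial_2\cG')-o(n^2)$, and Turán's theorem gives $e(\partial_2\cG')\le e(T(n,k-1))$.

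That alone does not bound $e(T')$ above by $e(T)+o(n^2)$ when $T$ is unbalanced. The main obstacle is exactly this: a weakly stable $T$ may be unbalanced, and a priori the new partition might have $e(T')$ much larger than $e(\partial_2\cG')$. My first attempt would be to restrict the minimization to partitions whose part sizes are within $o(n)$ of those of $T$, but the local move argument may then break. Instead I would argue directly. The new cross edges of $\partial_2\cG'$ number at least $e(T)-o(n^2)$. Thus $\partial_2\cG'$ is $o(n^2)$-close to a subgraph of $T'$ with $e(T)-o(n^2)$ edges. By the triangle inequality in edit distance, $T$ is close to a subgraph of $T'$, so $E(T)\setminus E(T')$ has size $o(n^2)$.

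A vertex-set comparison between two complete multipartite graphs then finishes the argument. Two complete $(k-1)$-partite graphs with $|E(T)\setminus E(T')|=o(n^2)$ and parts of $T$ of linear size (which holds because $\partial_2\cH$ has many copies, forcing $T$ to be near-extremal with linear-size parts) must have parts matching up to $o(n)$ symmetric differences. Hence $|E(T')\triangle E(T)|=o(n^2)$, and $T'$ is a valid choice of $T$.
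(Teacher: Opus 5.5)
Your argument is correct. It differs from the paper's at the one nontrivial point: showing that the max-cut choice $T'$ is still $o(n^2)$-close to $\partial_2\cG'$.

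The paper argues directly from near-extremality of $\partial_2\cG'$. Suppose the minimizer had $\Theta(n^2)$ missing cross pairs and all parts linear. Each missing pair lies in $\Theta(n^{|V(\cH)|-2})$ copies of $\partial_2\cH$ in $T$. Only $o(n^{|V(\cH)|})$ copies in $\partial_2\cG'$ use an inside edge. Hence $\partial_2\cG'$ would fall $\Theta(n^{|V(\cH)|})$ short of $\ex(n,\partial_2\cH,K_k)$. Small parts are ruled out separately by moving vertices into them.

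You instead transfer closeness from the stability-given $T$. The cross edges of $\partial_2\cG'$ form a subgraph of $T'$ that is $o(n^2)$-close to $T$, so $|E(T)\setminus E(T')|=o(n^2)$. If the parts of $T$ are linear, this forces each part of $T'$ to lie, up to $o(n)$ vertices, in a single part of $T$. The induced map on parts is a bijection, since every linear part of $T$ must receive some part of $T'$. Hence $|E(T)\triangle E(T')|=o(n^2)$.

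This isolates a purely structural, reusable fact: the max-cut partition of a graph close to a complete multipartite graph with linear parts is itself close. It also avoids the embedding count. The price is that near-extremality now enters only through your assertion that $T$ has parts of linear size, which you state without proof.

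That assertion is true and is essentially the content of the paper's second case, but you should write it out.
\begin{itemize}
\item The part proportions of $T$ must asymptotically maximize the $\partial_2\cH$-density over complete $(k-1)$-partite graphs, because $\cN(\partial_2\cH,T)=\cN(\partial_2\cH,\partial_2\cG')+o(n^{|V(\cH)|})$.
\item No maximizer has a vanishing part. If $\partial_2\cH$ is not properly colorable with the surviving parts, the density is $0$, which is below that of $T(n,k-1)$ since $\chi(\partial_2\cH)\le k-1$.
\item Otherwise, take a proper coloring of $\partial_2\cH$ by the surviving parts and an edge $uv$ with colors $a\ne b$. Recolor class $b$ by a new part carved out of part $a$. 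The resulting complete $(k-1)$-partite graph is a supergraph with a positive density of new copies using an $a$--new edge.
\item Compactness then gives a uniform lower bound $\varepsilon_0 n$ on the part sizes.
\end{itemize}

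The detour through $e(T')$ and Tur\'an's theorem can simply be deleted, since your final argument does not use it.
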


  \begin{proof}[Proof of Claim]
      We pick a complete $(k-1)$-partite graph $T$ such that the number $x$ of edges of $\partial_2\cG'$ inside parts $V_i$ is as small as possible. Then $v\in V_i$ with more neighbors in $V_i$ than in $V_j$ could be moved to $V_j$ to decrease $x$, a contradiction. By the stability, we know that $x=o(n^2)$. We have to show that with this choice of $T$, the number of non-edges of $\partial_2\cG'$ between parts is $o(n^2)$. Assume otherwise and compare the number of copies of $\partial_2\cH$ in $\partial_2\cG'$ and in $T$.

      Assume first that each part $V_i$ has order $\Theta(n)$. Then we claim that each edge $uv$ of $T$ is in $\Theta(n^{|V(\cH)|-2})$ copies of $\partial_2 \cH$. Indeed, $\partial_2 \cH$ has chromatic number at most $k-1$ by the $K_k$-Tur\'an-stable property. Let us pick an arbitrary proper coloring with at most $k-1$ colors. Embed an edge of $\partial_2 \cH$ to $uv$, the corresponding color classes to the parts containing $uv$, and the other color classes to distinct parts $V_i$. For each vertex other than $u$ and $v$, we had $\Theta(n)$ choices, thus we have $\Theta(n^{|V(\cH)|-2})$ embeddings. Applying this to each edge of $T$ not in $\partial_2\cG'$, we find $\Theta(n^{|V(\cH)|})$ copies of $\partial_2\cH$, each counted at most $|E(\cH)|$ times. Therefore, there are $\Theta(n^{|V(\cH)|})$ copies of $\partial_2\cH$ in $T$, not in $\partial_2\cG'$.

      On the other hand, a copy of $\partial_2\cH$ not in $T$ contains at least one of the $o(n^2)$ edges of $\partial_2\cG'$ inside parts. These can be counted by picking such an edge and $|V(\cH)|-2$ other vertices, thus their number is $O(n^{|V(\cH)|-2})$. Therefore, there are $o(n^{|V(\cH)|})$ copies of $\partial_2\cH$ in $\partial_2\cG'$, not in $T$. This implies that  $\cN(\partial_2\cH,\partial_2 \cG')<\cN(\partial_2\cH,T)-\Theta(n^{|V(\cH)|})\le \ex(n,\partial_2 \cH,K_k)-\Theta(n^{|V(\cH)|})$. This contradicts our assumption that  $\cN(\partial_2\cH,\partial_2 \cG')> \ex(n,\partial_2 \cH,K_k)-\gamma(\cH)\cN(\cH,\cG-\cG')$, completing the proof of the claim in this case.

      Assume that some parts have size $o(n)$.
        Similar to the argument above, there are $o(n^{|V(\mathcal H)|})$ copies of $\partial_2 \mathcal H$ in $T$ that are not in $\partial_2 \mathcal G'$, and there are $o(n^{|V(\mathcal H)|})$ copies of $\partial_2 \mathcal H$ in $\partial_2 \mathcal G'$ that are not in $T$.
        For the graph $T$, move $\Theta(n)$ vertices from another part to the parts of size $o(n)$, and denote the resulting graph by $T'$.
        After this operation, $o(n^{|V(\mathcal H)|})$ copies of $\partial_2 \mathcal H$ are destroyed, while $\Theta(n^{|V(\mathcal H)|})$ copies of $\partial_2 \mathcal H$ are created.
        This implies that $
        \mathcal N(\partial_2 \mathcal H, \partial_2 \mathcal G')
        < \mathcal N(\partial_2 \mathcal H, T') - \Theta(n^{|V(\mathcal H)|})
        \leq \operatorname{ex}(n, \partial_2 \mathcal H, K_k) - \Theta(n^{|V(\mathcal H)|})$.
        This contradicts our assumption that $
        \mathcal N(\partial_2 \mathcal H,\partial_2 \mathcal G')
        > \operatorname{ex}(n,\partial_2 \mathcal H,K_k)
        - \gamma(\mathcal H)\mathcal N(\mathcal H,\mathcal G-\mathcal G')$,
        completing the proof.
  \end{proof}


    Let us return to the proof of the theorem.
    Let $U_i$ denote the set of vertices in $V_i$ that have $\Theta(n)$ non-neighbors in some other parts, and $W_i=V_i\setminus U_i$. Then clearly $|U_i|=o(n)$.

    \begin{clm}\label{klik}
        If $u\in U_i$, then $u$ has $o(n)$ neighbors in $V_i$.
    \end{clm}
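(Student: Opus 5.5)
The plan is to combine the first claim with the definition of $U_i$. Fix $u\in U_i$. By definition there is an index $j\neq i$ with $u$ having $\Theta(n)$ non-neighbors in $V_j$ within $\partial_2\cG'$. By the first claim, the number of neighbors of $u$ inside $V_i$ is at most its number of neighbors inside $V_j$, which is at most $|V_j|-\Theta(n)$. This alone does not give $o(n)$, so we need an additional argument, and the natural one is optimality of $\cG$ (or $\partial_2\cG'$).

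\textbf{Step 1: optimality via a swap.} I would argue by contradiction. Suppose $u\in U_i$ has $cn$ neighbors in $V_i$ for some constant $c>0$ (along a subsequence). Since all but $o(n^2)$ pairs between distinct parts are edges and $|U_\ell|=o(n)$, I would find a copy of $K_{k-1}$ inside $N(u)$ with many vertices, in fact a blow-up. Concretely, pick vertices $w_\ell\in W_\ell\cap N(u)$ for each $\ell\ne i$ together with a neighbor $w_i\in N(u)\cap W_i$. These vertices in $W$ are adjacent to all but $o(n)$ vertices of other parts.

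The difficulty is that $u$ has $\Theta(n)$ non-neighbors in $V_j$. Still, by the first claim, $u$ has at least $cn$ neighbors in every part $V_\ell$. So $N(u)\cap W_\ell$ has size $\Theta(n)$ for every $\ell$, including $\ell=i$.

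\textbf{Step 2: find a $K_{k-1}$ in the neighborhood.} Inside $N(u)$ we have $\Theta(n)$-size subsets of each of $W_1,\dots,W_{k-1}$. Between distinct $W$-parts, each vertex misses only $o(n)$ vertices, so greedily we pick $w_\ell\in N(u)\cap W_\ell$ for $\ell=1,\dots,k-1$ that are pairwise adjacent. Together with $u$ they form a $K_k$ in $\partial_2\cG'$. This contradicts the fact, established as in the proof of Lemma \ref{cele}, that $\partial_2\cG'$ is $K_k$-free. Hence $u$ has $o(n)$ neighbors in $V_i$.

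The main obstacle is making the greedy choice in Step 2 rigorous and the quantifiers of $o(n)$ versus $\Theta(n)$ precise. I would fix $\varepsilon>0$, take $n$ large so that every $w\in W_\ell$ has at most $\varepsilon n$ non-neighbors outside $V_\ell$, and compare with $cn$ with $(k-1)\varepsilon<c$. Note also that $|N(u)\cap W_\ell|\ge cn-|U_\ell|\ge cn/2$. Then when choosing $w_\ell$ we avoid at most $(\ell-1)\varepsilon n<cn/2$ candidates, so a choice always exists. The key input is the first claim, which transfers the assumption of many neighbors in the own part $V_i$ to many neighbors in every part; without it Step 2 fails.
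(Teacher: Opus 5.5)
Your argument is correct and is essentially the paper's proof: the first claim upgrades $\Theta(n)$ neighbors of $u$ in $V_i$ to $\Theta(n)$ neighbors in every part, and you then greedily pick pairwise adjacent neighbors of $u$ from $W_1,\dots,W_{k-1}$ (including one in $W_i$) to obtain a $K_k$ in $\partial_2\cG'$, contradicting its $K_k$-freeness. The announced appeal to optimality ``via a swap'' is never used, since only $K_k$-freeness is needed, and the greedy count needs $\varepsilon<c/(2(k-1))$ rather than $(k-1)\varepsilon<c$; both points are cosmetic.
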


    \begin{proof}[Proof of Claim]
        Assume that $u$ has $\Theta(n)$ neighbors in $V_i$, then also in each other part. Consider a neighbor $v$ of $u$ in $W_i$. We will find a copy of $K_k$ in $\partial_2 \cG'$ that contains $u,v$ and one vertex from $k-2$ other parts each. We pick these vertices one by one, from a $W_j$. Each time, we have $u$ and one vertex from each of at most $k-2$ sets $W_j$. There is some $\ell$ such that we have not picked any vertex from $V_\ell$ yet. The already picked vertices from sets $W_j$ each have $o(n)$ non-neighbors in $V_\ell$, thus all but $o(n)$ vertices of $V_\ell$ are adjacent to each. Recall that $u$ has $\Theta(n)$ neighbors in $V_\ell$, thus we have $\Theta(n)$ vertices adjacent to each vertex already picked. Out of these $\Theta(n)$ vertices, we pick one from $W_\ell$. We found a copy of $K_k$, a contradiction completing the proof of the claim.
    \end{proof}

Let us remark that the proof of the above claim also shows that there is no edge inside $W_i$, and return to the proof of the theorem. Again, we compare the number of copies of $\partial_2\cH$ in $\partial_2\cG'$ and in $T$. Each copy in $\partial_2\cG'$ that is not in $T$ contains an edge inside $V_i$ for some $i$. One endpoint of such an edge is $u\in U_i$. The number of copies of $\partial_2\cH$ in $\partial_2\cG'$ but not in $T$ that contain $u$ is $o(n^{|V(\cH)|-1})$, since we pick a neighbor of $u$ inside $V_i$ $o(n)$ ways, and then the other $|V(\cH)|-2$ vertices $O(n)$ ways each. Therefore, the total number of copies of $\partial_2\cH$ in $\partial_2\cG'$ but not in $T$ is $o(\sum_{i=1}^{k-1}|U_i|n^{|V(\cH)|-1})$.

Rather than counting the number of copies of $\partial_2\cH$ in $T$ but not in $\partial_2\cG'$, we count only those that contain some $u\in U_i$. For each such vertex, there is some $j\neq i$ such that $u$ has $\Theta(n)$ non-neighbors in $V_j$. We consider a proper $(k-1)$-coloring of $\partial_2\cH$, and embed each color class to distinct parts $V_\ell$. We make sure to embed a vertex into $u$ and one of its neighbors to a vertex in $V_j$ that is a non-neighbor of $u$ in $\partial_2\cG'$. Each time we have $\Theta(n)$ choices (except when we embed to $u$), thus we find $\Theta(n^{|V(\cH)|-1})$ embeddings of $\partial_2\cH$ in $T$ but not in $\partial_2\cG'$ that contain $u$. In total, we find  $\Theta(\sum_{i=1}^{k-1}|U_i|n^{|V(\cH)|-1})$ embeddings. Each copy is counted $O(1)$ times, thus $\cN(\partial_2\cH, T)\ge \cN(\partial_2\cH,\partial_2\cG')+\Theta(\sum_{i=1}^{k-1}|U_i|n^{|V(\cH)|-1})$. Therefore, we are done if $\sum_{i=1}^{k-1}|U_i|>0$ and $r\ge 4$.

Let $\sum_{i=1}^{k-1}|U_i|=0$ or $r=3$, and assume first that there is no hyperedge in $\cG-\cG'$ that contains at least two vertices of $W_i$ for some $i$. If $\sum_{i=1}^{k-1}|U_i|=0$, then this means that each hyperedge contains only one vertex from different parts $V_i$. Then $\cG$ is a sub-hypergraph of the complete $(k-1)$-partite $r$-graph with parts $V_i$, hence its shadow $\partial_2\cG$ is a subgraph of $T$, thus $\cN(\cH,\cG)\le \cN(\partial_2\cH,\partial_2\cG)\gamma(\cH)\le \cN(\partial_2\cH,T)\gamma(\cH)\le \ex(n,\partial_2\cH,K_k)\gamma(\cH)$ and we are done.

If $r=3$, then we have deleted $O(n^{|V(\cH)|-1})$ copies of $\cH$  from $\cG$ to obtain $\cG'$. Therefore, we are done unless $\sum_{i=1}^{k-1}|U_i|=O(1)$ and we have deleted $\Theta(n^{|V(\cH)|-1})$ copies of $\cH$ from $\cG$ to obtain $\cG'$. This second property is possible only if there were $\Theta(n^2)$ edges contained in at most $\min \{|E(F)|, \binom{r}{2}\}-1$ hyperedges. But these edges are not in $\cG'$. Since there are $o(n^2)$ edges not in $\cG'$ between parts or incident to some $U_i$, we have some of them inside some $W_i$, a contradiction.

We have obtained that there is a hyperedge $h$ containing $u_1,u_2\in W_i$. Then we find a $K_k$ as in Claim \ref{klik}. We claim that this is the core of a Berge-$K_k$. Indeed, for the edge $u_1u_2$ we use the hyperedge $h$. Since the other edges of the $K_k$ are in $\cG'$, they are contained in at least $\min\{|E(F)|, \binom{r}{2}\}$ hyperedges, thus similar to proof of Lemma \ref{cele}, we will find a copy of Berge-$K_k$, completing the proof.
\end{proof}

Finally, we show that every hypergraph $\cH$ is eventually Berge Tur\'{a}n-good.

\begin{proof}[Proof of Theorem \ref{thm: eventurally Berge Turan good}]
    Let $\cH$ be an $r$-graph and let $k \geq k'(\cH) = \max \{k_0(\partial_2 \cH),k_1(\partial_2 \cH)\}$.
    By the result in \cite{gerbner2024stability}, $\partial_2 \cH$ is $K_k$-Tur\'{a}n-stable, thus is weakly $K_k$-Tur\'{a}n-stable.
    By Theorem \ref{thm: sharp result}, we have $\ex_r(n, \cH, \textup{Berge-}K_k) = \ex(n, \partial_2 \cH, K_k) \gamma(\cH)$.
    Since $k \geq \max \{k_0(\partial_2 \cH),k_1(\partial_2 \cH)\}$, $\partial_2 \cH$ is $K_k$-Tur\'{a}n-good, which means the Tur\'{a}n graph has the maximum number of copies of $\partial_2 \cH$ for sufficiently large $n$.
    This implies that the Tur\'{a}n graph $T_r(n,k-1)$ has the maximum number of copies of $\cH$.
    Thus $\cH$ is eventually Berge Tur\'{a}n-good.
\end{proof}

\section{Proof of Theorem \ref{main} and Theorem \ref{thm: connect version}}\label{sec: generalized Turan number}

In this section, we give the proof of Theorem \ref{main}, which states that $\ex_r(n,K_s^r,\textup{Berge-}F)\le \ex_s(n,\textup{Berge-}F)$ for an arbitrary graph $F$ when $s\geq r+1$.

\begin{proof}[Proof of Theorem \ref{main}]
    Consider an $n$-vertex Berge-$F$-free $r$-uniform hypergraph $\cH$ with $\ex_r(n,K_s^r,\textup{Berge-}F)$ copies of $K_s^r$. Let $\cH'$ denote the $s$-uniform hypergraph on the same vertex set, where the hyperedges are formed by the $s$-cliques in $\cH$. We are going to show that $\cH'$ is Berge-$F$-free. This will imply that $\ex_r(n,K_s^r,\textup{Berge-}F)\le |E(\cH')|\le \ex_s(n,\textup{Berge-}F)$.

   Assume that there is a Berge-$F$ in $\cH'$ and let $F^*$ denote its core. Consider the following auxiliary bipartite graph $G$. Part $A$ consists of the edges of $F^*$, part $B$ consists of the hyperedges of $\cH$, and there is an edge between $a\in A$ and $b\in B$ if $a$ is a subset of $b$. Then each vertex of $A$ is an edge contained in a copy of $K_s^r$ in $\cH$, thus its degree in $G$ is at least $\binom{s-2}{r-2}$. Clearly, each vertex in $B$ has degree at most $\binom{r}{2}$ in $G$.

   If $s\ge r+2$, then each degree in $A$ is at least as large as any degree in $B$. Any $A'\subset A$ is incident to at least $|A'|\binom{s-2}{r-2}$ edges in $G$, thus its neighborhood has order at least $|A'|\binom{s-2}{r-2}/\binom{r}{2}\ge |A'|$. Therefore, Hall's condition holds, hence there is a matching in $G$ covering $A$. This corresponds to a Berge-$F$ in $\cH$, a contradiction.

   If $s=r+1$, then each degree in $A$ is at least $r-1$.
   Let $A'$ denote the set of vertices in $A$ with degree more than $\binom{r}{2}$.
   Since the neighborhood of $A'$ has order larger than $|A'| \binom{r}{2} / \binom{r}{2} \geq |A'|$, by Hall's condition there is a matching covering $A'$.
   We are going to extend this matching to a matching covering $A$.
   We go through the vertices $a\in A\setminus A'$ one by one.
   Each time we consider the corresponding hyperedge in $\cH'$, and pick an $r$-edge from that $(r+1)$-set as the element of $b$ adjacent to $a$.
   We have to pick one of the $r$-sets that contains $a$ and has not been picked earlier.
   
   If it is not possible, then we have a 2-set $a \in A\setminus A'$ in $\cH$ and an $(r+1)$-set $S$ containing $a$ such that each $r$-subset of $S$ that contains $a$ have already been picked.
   This means that for those $r-1$ $r$-sets, we have an outside vertex that forms an $s$-clique with them in $\cH$.
   In other words, for each vertex $u\in S \setminus a$ there is a vertex $u'\not\in S$ such that $(S\setminus u)\cup u'$ is a clique in $\cH$.
   Then for any $u,v\in S\setminus a$, both $(S\setminus \{u,v\})\cup u'$ and $(S\setminus \{u,v\})\cup v'$ are hyperedges of $\cH$.
   Moreover, $a$ is contained in $r-1$ $r$-sets in $S$.
   Hence, $a$ is contained in at least $(r-1) + 2\binom{r-1}{2} = (r-1)^2\ge \binom{r}{2}$ hyperedges, a contradiction.
   Thus we can find a matching covering $A$, which corresponds to a Berge-$F$ in $\cH$, a contradiction.
\end{proof}

Given a hypergraph $\cH'$ and a component $\cH_i$ of $\cH'$, we say that a set $B$ of vertices inside $\cH_i$ forms a \textit{hanging block} if $B$ contains a vertex $v$ such that all the hyperedges intersecting with $B\setminus \{v\}$ are contained in $B$.
We need the following result.
\begin{lem}[Khormali, Palmer \cite{khpa}]\label{lem: berge star}
    For fixed integers $\ell >r\geq 2$, if $x$ is a vertex of degree more than $\binom{\ell-1}{r-1}$, then there is a Berge-$S_{\ell}$ with center $x$.
\end{lem}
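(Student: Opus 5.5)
This is Lemma~\ref{lem: berge star}, cited from Khormali and Palmer. Here $S_\ell$ is the star with $\ell$ edges and center $x$. The plan is to reduce it to a Hall-type matching argument, in the same spirit as the proofs of Lemma~\ref{cele} and Theorem~\ref{main}.

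\textbf{Reformulation.} Let $\mathcal E$ be the set of hyperedges containing $x$, so $|\mathcal E|>\binom{\ell-1}{r-1}$. A Berge-$S_\ell$ with center $x$ amounts to $\ell$ distinct vertices $y_1,\dots,y_\ell\neq x$ together with distinct hyperedges $h_1,\dots,h_\ell\in\mathcal E$ such that $y_i\in h_i$. Form the bipartite graph $G$ with part $\mathcal E$ on one side and $V\setminus\{x\}$ on the other, joining $h$ to $y$ whenever $y\in h$. Each $h\in\mathcal E$ has degree exactly $r-1$ in $G$. We need a matching of size $\ell$ in $G$.

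\textbf{König's theorem.} By König's theorem, if there is no matching of size $\ell$, then $G$ has a vertex cover $C=C_1\cup C_2$ with $C_1\subseteq\mathcal E$, $C_2\subseteq V\setminus\{x\}$ and $|C_1|+|C_2|\le \ell-1$. Every hyperedge in $\mathcal E\setminus C_1$ has all of its $r-1$ non-$x$ vertices inside $C_2$. Distinct hyperedges give distinct $(r-1)$-subsets, so
\[|\mathcal E|\le |C_1|+\binom{|C_2|}{r-1}\le t+\binom{\ell-1-t}{r-1},\]
where $t=|C_1|\in[0,\ell-1]$.

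\textbf{Main step.} It remains to show that $t+\binom{\ell-1-t}{r-1}\le\binom{\ell-1}{r-1}$ for all $0\le t\le \ell-1$. This contradicts the degree hypothesis and finishes the proof.
\begin{itemize}
\item The function $f(t)=t+\binom{m-t}{r-1}$, with $m=\ell-1$, is convex in $t$, so its maximum over the interval is attained at an endpoint.
\item At $t=0$ we get $f(0)=\binom{\ell-1}{r-1}$.
\item At $t=\ell-1$ we get $f(\ell-1)=\ell-1$. Since $\ell-1\ge r\ge 2$, we have $\binom{\ell-1}{r-1}\ge \ell-1$, with equality when $r=2$.
\end{itemize}
Hence $f(t)\le\binom{\ell-1}{r-1}$ throughout, giving the contradiction.

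This arithmetic check is the only real obstacle. Convexity could instead be replaced by an induction on $t$: each step $t\to t+1$ increases $f$ by $1$ and decreases $\binom{m-t}{r-1}$ by $\binom{m-t-1}{r-2}$, which is at least $1$ while $m-t-1\ge r-2$. So $f$ is nonincreasing until the binomial vanishes, after which $f(t)=t\le \ell-1$.
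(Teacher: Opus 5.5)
Your argument is correct. The paper gives no proof of this lemma to compare against: it is quoted from Khormali and Palmer and used as a black box in Claim~\ref{clm: find path}. Your K\"onig-theorem argument is therefore a complete, self-contained proof, in the same Hall-type spirit as the proofs of Lemma~\ref{cele} and Theorem~\ref{main}.

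Two small remarks.

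\begin{enumerate}
\item The injection of $\mathcal E\setminus C_1$ into the $(r-1)$-subsets of $C_2$ uses that the hypergraph is $r$-uniform and has no repeated hyperedges. This is the standing convention. It also holds in the paper's application, where the lemma is applied, with $s$ in place of $r$, to the non-defining hyperedges of the $s$-graph of $s$-cliques.
\item ``Convex'' should be read as discrete convexity, $f(t+2)-2f(t+1)+f(t)=\binom{m-t-2}{r-3}\ge 0$. Your difference computation in the last paragraph already makes this rigorous.
\end{enumerate}

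A useful by-product of your route is the exact threshold $\max\{\binom{\ell-1}{r-1},\ell-1\}$ for every $\ell$. This shows that the hypothesis $\ell>r$ is needed only to make the first term the larger one. Both terms are attained: by all $r$-sets through $x$ inside a fixed $\ell$-set containing $x$, and by $\ell-1$ arbitrary hyperedges through $x$.
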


Now, we prove Theorem \ref{thm: connect version}, which determines the connected generalized Tur\'an number of a Berge path.
We rewrite it here for convenience.
\bigskip

\noindent \textbf{Theorem.}
\textit{
    For integers $s> r\ge 3$, $k\geq 2s+4\ge 12$, and sufficiently large $n$, we have}
    $$\ex_r^{conn}(n,K_s^r,\textup{Berge-}P_k)= \binom{\lf k/2 \rf-1}{s-1}\left( n-\lc k/2 \rc\right)+\binom{ \lc k/2 \rc}{s}.$$

\begin{proof}
    The lower bound is obtained by a construction.
    Fix a vertex set $S$ of size $\lfloor \frac{k}{2} \rfloor - 1$ and add $n - |S|$ additional vertices.
    Take all $r$-sets that contain at least $r-1$ vertices in $S$ as hyperedges.
    When $k$ is even, those constitute all hyperedges.
    When $k$ is odd, we fix two vertices $u_1, u_2$ outside $S$, and take all $r$-sets that contain $u_1, u_2$ together with $r-2$ vertices in $S$.
    The resulting hypergraph is Berge-$P_k$-free.
    If $k$ is even, then each $s$-set that contains at least $s-1$ vertices in $S$ forms a $K_s^r$.
    If $k$ is odd, then each $s$-set that either contains at least $s-1$ vertices in $S$ or contains $u_1, u_2$ together with $s-2$ vertices in $S$ forms a $K_s^r$.

    Let $\cH$ be the connected $n$-vertex $r$-graph with $\ex_r^{conn}(n,K_s^r,\textup{Berge-}P_k)$ copies of $K_s^r$.
    Then, we construct $\cH'$ similarly to Theorem \ref{main}, which is the $s$-graph on the same vertex set as $\cH$, where the hyperedges are formed by the $s$-cliques in $\cH$.
    The proof of Theorem \ref{main} shows that $\cH'$ is also $\textup{Berge-}P_k$-free.
    If $\cH'$ is also connected, then we are done by using the value of $\ex_s^{conn}(n,\textup{Berge-}P_k)$ in Theorem \ref{thm: conn path}.

    Let $\cH_1,\dots,\cH_h$ be the components of $\cH'$.
    Theorem \ref{main} also implies that if there is a copy of $\textup{ Berge-}F$ in $\cH_i$, then there is a copy of $\textup{Berge-}F$ in $\cH[V(\cH_i)]$ in the original hypergraph $\cH$, and with the same core.
    For each component $\cH_i$, we classify them as follows.
    \begin{itemize}
        \item A component $\cH_i$ is \textit{nice} if each vertex is contained in at least $\binom{\lf k/2 \rf -1}{s-1}$ hyperedges in $\cH'$ and contains no hanging blocks with size $\lf k/2 \rf$. 
        \item A component $\cH_i$ is \textit{strong} if after deleting the vertices with degree less than $\binom{\lf k/2 \rf}{s-1}$ and the non-cut vertices in hanging blocks with size  $\lf k/2 \rf$ one by one, the remaining hypergraph is not empty.
        \item A component $\cH_i$ is \textit{bad} if after deleting the vertices with degree less than $\binom{\lf k/2 \rf}{s-1}$ and the non-cut vertices in hanging blocks with size $\lf k/2 \rf$ one by one, the remaining hypergraph is empty.
    \end{itemize}
Then, we have the following claim.
\begin{clm}\label{clm: find path}
    If a component $\cH_i$ is nice, then for every vertex $v\in V(\cH_i)$, there is a Berge path with length at least $\lf k/2 \rf$ starting at $v$.
\end{clm}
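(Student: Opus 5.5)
Set $m=\lfloor k/2\rfloor$. Throughout we work in the $s$-uniform hypergraph $\cH'$. The first reduction is that it suffices to find a Berge path in $\cH'$ starting at $v$ that uses at least $m$ distinct hyperedges and has all its core vertices distinct. The plan is a greedy, longest-path argument. Take a longest Berge path in $\cH_i$ starting at $v$; call its core $v=x_0,x_1,\dots,x_t$, its hyperedges $e_1,\dots,e_t$, and suppose $t<m$. We look at the last vertex $x_t$. Every hyperedge of $\cH'$ containing $x_t$ that is not among $e_1,\dots,e_t$ must lie inside $X=\{x_0,\dots,x_t\}$; otherwise we could extend the path by a new vertex. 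The degree condition in the definition of a nice component says $x_t$ lies in at least $\binom{m-1}{s-1}$ hyperedges. Since at most $t\le m-1$ of these are path hyperedges, the remaining ones are $s$-sets inside $X$ containing $x_t$.

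If $|X|=t+1\le m-1$, the number of such $s$-sets is at most $\binom{t}{s-1}\le\binom{m-2}{s-1}$. Then
\[
\binom{m-1}{s-1}-(m-1)\le\binom{m-2}{s-1},
\]
which fails because $\binom{m-1}{s-1}-\binom{m-2}{s-1}=\binom{m-2}{s-2}>m-1$ when $m\ge s+2$ and $s\ge 4$. Both hold here, since $k\ge 2s+4$ gives $m\ge s+2$, and $s>r\ge 3$ gives $s\ge 4$. So we get a contradiction unless $t=m-1$, that is, $|X|=m$.

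The remaining case is $t=m-1$, where the path has $m-1$ hyperedges. Here I would use a rotation (P\'osa-type) argument. If some hyperedge $h$ with $x_t\in h\subseteq X$ contains $x_j$, and $h$ is not a path hyperedge, then $x_0,\dots,x_j,x_t,x_{t-1},\dots,x_{j+1}$ with $h$ replacing $e_{j+1}$ as the edge $x_jx_t$ is again a Berge path from $v$ of the same length, now ending at $x_{j+1}$. The degree count shows $x_t$ lies in many $s$-subsets of $X$, so many rotations are available. Each new endpoint $y$ again has all its non-path hyperedges inside $X$; if not, we extend and obtain length $m$. Closing the rotation set under this operation, we aim to show that every hyperedge meeting the set $R$ of reachable endpoints (together with $X$ minus a single vertex) lies inside $X$. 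Concretely, we should show that all hyperedges meeting $X\setminus\{v\}$ are contained in $X$. Then $X$ is a hanging block of size $m$ with distinguished vertex $v$, contradicting niceness. A further point: $\cH_i$ is connected and $n$ is large, so $X$ cannot be the whole component. Hence some hyperedge must leave $X$ at a vertex other than the fixed one, and we want that vertex to be reachable as an endpoint.

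The main obstacle is this last step: proving that the rotations reach every vertex of $X\setminus\{v\}$, or otherwise certifying the hanging-block structure. The counting above gives each endpoint at least $\binom{m-1}{s-1}-(m-1)$ non-path hyperedges inside $X$. Since $\binom{m-2}{s-2}$ is large compared with $m$, these hyperedges cover essentially all pairs, so from each endpoint one can rotate to any $x_{j+1}$. I would first try to show that every vertex of $X\setminus\{x_0\}$ becomes an endpoint. Failing that, I would show that a vertex $x_j$ outside $X$-closure allows one to reorder the path so that $x_j$ is last and pick up an external hyperedge, using the slack $k\ge 2s+4$. Care is needed because rotations consume hyperedges: a hyperedge used for the rotation becomes a path hyperedge, and freed ones return, so the distinctness bookkeeping must be tracked. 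Berge distinctness is maintained since exactly one hyperedge is swapped per rotation.
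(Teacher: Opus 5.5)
Your plan is the paper's: take a longest path from $v$, use the degree count to force $|X|=m$, rotate, and contradict the no-hanging-block clause of niceness. Your direct count of $s$-subsets of $X$ is a fine replacement for the paper's use of the Khormali--Palmer Berge-star lemma in the first step. However, you do not actually prove the case $t=m-1$. You call it ``the main obstacle'' and support it only by the heuristic that the non-path hyperedges at $x_t$ ``cover essentially all pairs.'' You then propose iterated rotations whose bookkeeping you leave open. Without this step you have not shown that $X$ is a hanging block, so the claim is not established. Also, your appeal to connectivity and large $n$ is beside the point: components of $\cH'$ may be small, and the contradiction comes solely from the hanging-block clause of niceness.

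The missing argument needs only one rotation of the original path.

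\emph{Every $x_j$ meets $x_t$ in a free hyperedge.} For each $j<t$, some non-path hyperedge $h_j$ contains both $x_j$ and $x_t$. By your own inequality there are more than $\binom{m-2}{s-1}$ non-path hyperedges at $x_t$. All of them are $s$-subsets of the $m$-set $X$ through $x_t$, and only $\binom{m-2}{s-1}$ such subsets avoid $x_j$. Hence, for every $0\le j\le t-1$, the sequence $x_0,\dots,x_j,x_t,\dots,x_{j+1}$, with $h_j$ in place of $e_{j+1}$, is a Berge path $Q_{j+1}$ from $v$ of length $m-1$ ending at $x_{j+1}$. Since no Berge path of length $m$ starts at $v$ and $V(Q_{j+1})=X$, every hyperedge through $x_{j+1}$ not used by $Q_{j+1}$ lies in $X$.

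\emph{The path hyperedges lie in $X$.} Your sketch never addresses this point. In $Q_{j+1}$ the hyperedge $e_{j+1}$ is unused and contains the endpoint $x_{j+1}$, so $e_{j+1}\subseteq X$. Now take any other hyperedge $g\ni x_{j+1}$ outside $\{e_1,\dots,e_t\}$. Either $g$ is unused by $Q_{j+1}$, so $g\subseteq X$. Or $g=h_j$, which lies in $X$ because it is a non-path hyperedge at $x_t$ of the original longest path.

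Thus every hyperedge meeting $X\setminus\{x_0\}$ is contained in $X$. So $X$ is a hanging block of size $m$ with distinguished vertex $v$, contradicting niceness. This is how the paper concludes, and it removes your bookkeeping worry, since no iteration of rotations is needed.
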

\begin{proof} First we show that in $\cH_i$, any path that cannot be extended further at one of the endpoints has length at least $\lf k/2 \rf-1$.
Let $\cP$ be a path starting at $v$ with defining vertices $v=v_1,v_2,\dots,v_t$, and defining hyperedges $e_1,\dots,e_{t-1}$, where $\{v_i,v_{i+1}\}\subseteq e_i$.

    We prove that $t\ge \lf k/2 \rf$. Indeed, otherwise if $t\leq \lf k/2 \rf-1$, 
    since each vertex has degree at least $\binom{\lf k/2 \rf-1}{s-1}$, $v_t$ is contained in at least $\binom{\lf k/2 \rf-1}{s-1}-(t-1)>\binom{\lf k/2 \rf-2}{s-1}$ hyperedges that are not defining hyperedges of $\cP$. Then using Lemma \ref{lem: berge star}, we can find a Berge $S_{\lf k/2 \rf-1}$ with center $v_t$ using only non-defining hyperedges, thus we can find a vertex $v_{t+1}\not \in \{v_1,\dots, v_{t-1}\}$ that is joined to $v_t$ by a non-defining hyperedge. 
    

    We consider the vertex $v_{\lf k/2 \rf}$, which is contained in at least $\binom{\lf k/2 \rf-1}{s-1}-(\lf k/2 \rf-1)> \binom{\lf k/2 \rf-2}{s-1}$ hyperedges besides the defining hyperedges of $\cP$, and there is no other vertex $u$ besides $\{v_1,\dots,v_{\lf k/2 \rf-1}\}$ that is joined to $v_{\lf k/2 \rf}$ by a non-defining hyperedge.
    Thus, for every $v_i$, $i\leq \lf k/2\rf-1$, there is a non-defining hyperedge containing $v_{\lf k/2 \rf}$ and $v_i$.
   
    Then we have that for every vertex $v_i$ and $2\le i\le \lf k/2 
    \rf-1$, there is a Berge path $\cP^i$ with length $\lf k/2 \rf-1$ starting at $v_1$ and ending at $v_i$. Indeed, we go on $\cP$ from $v_1$ to $v_{i-1}$, then to $v_{\lf k/2 \rf-1}$ using a non-defining hyperedge, and then we go backwards on $\cP$ to $v_{i+1}$. 
    Thus, with the same proof, there is no vertex $u$ outside $\{v_2,\dots,v_{\lf k/2 \rf}\}$ that is joined to $v_i$ by a non-defining hyperedge for every $i=2,\dots,2k$.
    
    Suppose that there exists $u$ outside $\{v_1,\dots,v_{\lf k/2 \rf}\}$  that is contained in some defining hyperedges $e_i$. Then $v_1,\dots,v_{i-1},v_{\lf k/2 \rf},\dots,v_i,u$ forms a Berge path with length $\lf k/2 \rf$, where for the edge $v_iu$ we pick the hyperedge $e_i$.
    If there is no such $u$, then $\{v_1,\dots,v_{\lf k/2 \rf}\}$ forms a hanging block, a contradiction. 
\end{proof}
\begin{clm}
    There are no two components which are both nice or strong.
\end{clm}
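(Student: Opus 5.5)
The plan is to argue by contradiction. Suppose $\cH_i$ and $\cH_j$ are two distinct components of $\cH'$, each nice or strong. Since $\cH$ is connected, we will join them in $\cH$ and obtain a Berge-$P_k$ in $\cH$. This proceeds in three steps.

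\emph{Step 1: long paths inside each component.} In each of the two components we find, from a suitable vertex, a Berge path in $\cH'$ with at least $\lf k/2\rf$ edges, i.e.\ $\lf k/2\rf+1$ defining vertices.
\begin{itemize}
\item For a nice component this is exactly Claim \ref{clm: find path}, and the path can start at any vertex.
\item For a strong component we take the nonempty remaining subhypergraph $\cH_i^*$. Every vertex of $\cH_i^*$ has degree at least $\binom{\lf k/2\rf}{s-1}$ in $\cH'$. The deleted vertices were either of low degree or non-cut vertices of hanging blocks, so removing them only detaches pendant pieces. We rerun the argument of Claim \ref{clm: find path} with the larger degree bound. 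A maximal path from $v$ must then reach length $\lf k/2\rf$, because Lemma \ref{lem: berge star} with $\ell=\lf k/2\rf$ gives an extension while $t\le\lf k/2\rf$. The closing step again excludes a hanging block of size $\lf k/2\rf$.
\item From a vertex $w$ of $\cH_i^*$ we then reach any vertex of $\cH_i$ by walking through $\cH_i$. This gives long paths starting at the relevant attachment vertex.
\end{itemize}

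\emph{Step 2: connect the components.} Since $\cH$ is connected while $\cH_i,\cH_j$ are distinct components of $\cH'$, there is a shortest Berge path $Q$ in $\cH$ from $V(\cH_i)$ to $V(\cH_j)$. Let its endpoints be $x\in V(\cH_i)$ and $y\in V(\cH_j)$. Its hyperedges are $r$-edges of $\cH$ which, by minimality, have at most endpoints in common with the vertex sets of the two components.

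\emph{Step 3: assemble the path.} We take a path $P_x$ in $\cH_i$ ending at $x$ with $\lf k/2\rf$ edges, and a path $P_y$ in $\cH_j$ starting at $y$ with $\lf k/2\rf$ edges. By the remark after Theorem \ref{main}, each of these can be converted into a Berge path in $\cH[V(\cH_i)]$, respectively $\cH[V(\cH_j)]$, with the same core. The two vertex sets are disjoint, so all hyperedges used are distinct, and the hyperedges of $Q$ are distinct from both. The concatenation $P_x Q P_y$ has at least $2\lf k/2\rf+1\ge k-1$ edges, which is a Berge-$P_k$ in $\cH$ and a contradiction.

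One subtlety is that the converted hyperedges might also contain vertices outside $V(\cH_i)$. The conversion, however, uses $r$-subsets of $s$-cliques of $\cH'$ inside $V(\cH_i)$, plus outside vertices only in the $s=r+1$ case. There we would choose the converted hyperedges within $\cH[V(\cH_i)]$ as stated, and $Q$ is chosen shortest, so $Q$'s hyperedges differ from these.

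The main obstacle is Step 1 for strong components. We need a long path ending precisely at the attachment vertex $x$, which may have been deleted in the peeling. To handle this, we would first go from $x$ to the core $\cH_i^*$ along a path in $\cH_i$. We then continue inside $\cH_i^*$, using the high minimum degree and Lemma \ref{lem: berge star} to extend while avoiding the at most $\lf k/2\rf$ already used vertices, until the total length reaches $\lf k/2\rf$. The slack between $\binom{\lf k/2\rf}{s-1}$ and $\binom{\lf k/2\rf-2}{s-1}$, together with $k\ge 2s+4$, should absorb the hyperedges already used.
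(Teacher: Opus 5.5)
Your proposal is correct and follows essentially the paper's proof. You find long Berge paths from the two attachment vertices and glue them with a shortest Berge path in $\cH$ between the components. For a strong component you walk from the attachment vertex into the peeled core and extend there; applying the degree bound $\binom{\lf k/2\rf}{s-1}$ in the core directly via Lemma~\ref{lem: berge star} with $\ell=\lf k/2\rf$, rather than the paper's remark that the core is nice, is a harmless variant that even makes the hanging-block step unnecessary.

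Two small corrections. First, the hyperedges of $Q$ may meet $V(\cH_i)$ or $V(\cH_j)$ in more than the endpoints, since the first hyperedge can contain further vertices of $V(\cH_i)$. What you actually need, and what minimality gives, is that each hyperedge of $Q$ contains a defining vertex outside $V(\cH_i)$ and one outside $V(\cH_j)$, so it lies in neither $\cH[V(\cH_i)]$ nor $\cH[V(\cH_j)]$. Second, the minimum degree should be taken in the peeled core, not in $\cH'$.
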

\begin{proof}
    First we show that Claim \ref{clm: find path} extends to strong components. Observe that by definition, from a strong component $\cH_0$, we obtain a non-empty nice component $\cH_0'$ by deleting some vertices. If $v\in V(\cH_0')$, then we find a Berge path of length at least $\lf k/2 \rf$ starting at $v$ inside $\cH_0'$. Otherwise, we consider a shortest Berge path from $v$ to a vertex $v'$ of $\cH_0'$, and extend it with a Berge path of length at least $\lf k/2 \rf$ starting at $v'$ inside $\cH_0'$.

    Suppose that $\cH_1$ and $\cH_2$ are both nice or strong. 
    Since $\cH$ is connected, there exists a shortest Berge path (denoted by $\cP$) connecting $V(\cH_1)$ and $V(\cH_2)$ in $\cH$.
    Since $\cP$ is the shortest Berge path, the defining hyperedges of $\cP$ are not in $\cH[V(\cH_1)]\cup \cH[V(\cH_2)]$.
    Let the end defining vertex of $\cP$ in $\cH_i$ be $u_i$.
    According to Claim \ref{clm: find path}, there exists Berge paths with length at least $\lf k/2 \rf$ in $\cH_i$ starting at $u_i$ in $\cH'$. These two paths together with $\cP$ 
form a Berge path with length at least $2\lf k/2 \rf+1\ge k$, a contradiction.
\end{proof}
Let us delete the vertices with degree less than $\binom{\lf k/2 \rf -1}{s-1}$, and the non-cut vertices of hanging blocks with size $\lf k/2 \rf$  one by one.
When deleting the $\lf k/2 \rf-1$ non-cut vertices in the hanging blocks, the number of hyperedges we delete is at most $\binom{\lf k/2 \rf }{s}< (\lf k/2 \rf-1)\binom{\lf k/2 \rf-1}{s-1}$.
Thus, on average, the number of hyperedges destroyed when deleting one vertex is at most $\binom{\lf k/2 \rf-1}{s-1}-1$.
Then we are left with at most one component $\cH_i$ with $n'$ vertices.
Notice that $\cH_i$ is Berge-$P_k$-free.
If $n'\geq n_{s,k}$, then by Theorem \ref{thm: conn path}, we have $$e(\cH_i)\leq  \binom{\lf k/2 \rf-1}{s-1}\left(n'-\lc k/2 \rc\right)+\binom{ \lc k/2 \rc}{s}.$$
The total number of hyperedges in $\cH'$ is at most
$$\binom{\lf k/2 \rf-1}{s-1}\left(n'-\lc k/2 \rc\right)+\binom{ \lc k/2 \rc}{s}+(n-n')\left(\binom{\lf k/2 \rf-1}{s-1}-1\right),$$
which is at most
If $n'<n_{s,k}$, then the number of hyperedges in $\cH'$ is at most
$$\binom{n_{s,k}}{s}+\left(\binom{\lf k/2 \rf-1}{s-1}-1\right)n<\binom{\lf k/2 \rf-1}{s-1}\left(n-\lc k/2 \rc\right)+\binom{ \lc k/2 \rc}{s}.$$
The inequality holds when $n$ is large enough.
\end{proof}






\bigskip
\textbf{Funding}:
The research of Zhao is supported by the China Scholarship Council (No. 202506210250) and
the National Natural Science Foundation of China (Grant 12571372).

The research of Xin is supported by the National Natural Science Foundation of China (Nos. 12131013 and 12471334), Shaanxi Fundamental Science Research Project for Mathematics and Physics (No. 22JSZ009) and the China Scholarship Council (No. 202406290241). 

The research of Gerbner is supported by the National Research, Development and Innovation Office - NKFIH under the grant KKP-133819 and by the János Bolyai scholarship.

\end{document}